\newtheorem{lemma}{Lemma}[section]
\newtheorem{theorem}[lemma]{Theorem}
\newtheorem{propo}[lemma]{Proposition}
\theoremstyle{remark}
\newtheorem{rem}[lemma]{Remark}
\def\Mat{\mathop{\rm Mat}\nolimits}          
\def\GL{\mathop{\rm GL}\nolimits}
\def\SO{\mathop{\rm SO}\nolimits}
\def\SL{\mathop{\rm SL}\nolimits}
\def\Sp{\mathop{\rm Sp}\nolimits}
\def\GL{\mathop{\rm GL}\nolimits}            
\def\PSL{\mathop{\rm PSL}\nolimits}          
\def\PSp{\mathop{\rm PSp}\nolimits}           
\def\PGL{\mathop{\rm PGL}\nolimits}           
\def\SU{\mathop{\rm SU}\nolimits}          
\def\PSU{\mathop{\rm PSU}\nolimits}          
\def\Sp{\mathop{\rm Sp}\nolimits}
\def\PGamL{\mathop{\rm P\Gamma L}\nolimits}
\newcommand{\tr}[1]{{\rm Tr}(#1)}
\def\char{\rm char\,}
\def\dim{\mathop{\rm dim}\nolimits }
\def\rad{\mathop{\rm rad}\nolimits }
\def\diag{\mathop{\rm diag}\nolimits}
\def\ovT{\overline{f}}
\def\eps{\varepsilon}
\def\om2{\omega^ {-\ell}}
\newcommand{\Z}{\mathbb{Z}}    % numeri interi
\newcommand{\F}{\mathbb{F}}    % campo
\begin{document}

\title{Scott's formula and Hurwitz groups}

\author{M.A. Pellegrini}

\address{Dipartimento di Matematica e Fisica, Universit\`a Cattolica del Sacro Cuore, Via Musei 41,
I-25121 Brescia, Italy}
\email{marcoantonio.pellegrini@unicatt.it}

\author{M.C. Tamburini Bellani}
\address{Dipartimento di Matematica e Fisica, Universit\`a Cattolica del Sacro Cuore, Via Musei 41,
I-25121 Brescia, Italy}
\email{mariaclara.tamburini@unicatt.it}

\keywords{Hurwitz generations; trilinear forms; $G_2(q)$}
\subjclass[2010]{20G40, 20F05}
 
\begin{abstract} 
This paper continues previous work, based on 
systematic use of a formula of L. Scott, to detect Hurwitz groups.
It closes the problem of determining the finite simple groups contained in
$\PGL_n(\F)$ for $n\le 7$ which are Hurwitz, where $\F$ is an algebraically closed field. 
For the groups $G_2(q)$, $q\ge 5$, and the Janko groups $J_1$
and $J_2$ it provides explicit $(2,3,7)$-generators. 
\end{abstract}

\maketitle

\section{Introduction}

Let $\F$ be  an algebraically closed field of any characteristic. Consider a finitely generated group $H$ and a 
representation $\Phi: H \rightarrow \GL(V)$, with representation space $V\cong \F^m$. For any subset $A$ of $H$ define: 
\begin{eqnarray*}
d_V^A & =& \dim\{ v\in V \mid \Phi(a)v=v, \textrm{ for all } a \in A\},\\ 
\hat d_V^A & =& \dim\{ v\in V \mid \Phi(a)^ T v=v, \textrm{ for all } a \in A\}.
\end{eqnarray*}
Moreover, for $A=\left\{a\right\}$, set $d_V^A=d_V^a$. 
By a special case of the well known formula of L. Scott \cite{S} (see also \cite{hb}),
if $H=\langle x,y\rangle$, then
\begin{equation}\label{Scott.gen}
d_V^ x+d_V^ y + d_V^ {xy}\leq m+ d_V^ H+\hat d_V^ H. 
\end{equation} 
This formula has a crucial impact on Hurwitz generation in low rank, as already suggested by Scott himself,
and made more evident by the work of A. Zalesski \cite{DTZ, VZ} and others after him. 

To illustrate this claim, it is convenient to introduce some terminology. A triple $(x,y,xy)$ in $\GL_n(\F)^3$  is called \emph{irreducible} whenever
$\left\langle x,y\right\rangle$ is an irreducible subgroup of $\GL_n(\F)$. It is called  
\emph{rigid} when it is irreducible  and equality holds in \eqref{Scott.gen}, with respect to  the conjugation action $\Phi$ 
of $H=\left\langle x,y\right\rangle$ on  $V=\Mat_n(\F)$. Finally $(x,y,xy)$  is said to be a $(2,3,7)$-triple 
whenever the projective images of $x,y,xy$ have respective orders $2,3,7$. Clearly all these properties are preserved by conjugation.

In \cite{TV1} were classified (up to multiplication by scalars)
the admissible similarity invariants, equivalently the admissible Jordan canonical forms (e.g., see \cite[3.10]{Jac}), of $x,y,xy$ in an irreducible  $(2,3,7)$-triple  $(x,y,xy)\in \GL_n(\F)^3$, $n\le 7$. It turns out that such triples  are all rigid if and only if $n\le 5$.
For $n=6,7$, the similarity invariants of the non-rigid ones (parametrized in \cite{V3} and \cite{TV2}) are  respectively: 
\begin{equation}\label{inv}
t^ 2+1, \; t^ 2+1,\; t^ 2+1; \qquad t^3-1,\; t^3-1; \qquad t^ 6+t^ 5+t^ 4+t^ 3+t^2+t+1,
\end{equation}
for $n=6$ (see  \cite[1.1.1 page 349 and 2.1.1 page 350]{TV1}), and  
\begin{equation}\label{inv7}
t+1, \;t^ 2-1, \; t^ 2-1,\; t^ 2-1; \qquad t-1,\;t^3-1,\; t^3-1; \qquad t^ 7-1,
\end{equation}
for $n=7$ (see \cite[1.1 page 351]{TV1}).

An irreducible triple of type \eqref{inv} generates a symplectic group. The irreducibility of a triple of type 
\eqref{inv7} forces $\char \F\neq 2$ and the corresponding group is orthogonal. The proof of these facts uses techniques based
on Scott's formula (see Lemma \ref{lem:symp}).

In this paper we continue the analysis started in \cite{TV1} and \cite{TV2} with respect to the  non-rigid triples. In particular we prove the  following two results:

\begin{theorem}\label{p7}
Suppose that $\F$ is an algebraically closed field. Let $H_7$ 
be an irreducible subgroup of $\SL_7(\F)$ generated by a non-rigid 
$(2,3,7)$-triple, i.e. a triple with similarity invariants \eqref{inv7}. Then $\char \F\neq 2$
and  $H_7$ is a subgroup of  $G_2(\F)$.
\end{theorem}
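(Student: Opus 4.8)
The plan is to run Scott's formula \eqref{Scott.gen} on a short list of functorial modules built from the natural module $W=\F^{7}$, and then to recognise $G_2(\F)$ as the stabiliser of an invariant alternating $3$-form. Throughout I read off from \eqref{inv7} the eigenvalue data of the generating triple $(x,y,xy)$: the involution $x$ has eigenvalue $1$ with multiplicity $3$ and $-1$ with multiplicity $4$; the element $y$ has eigenvalue $1$ with multiplicity $3$ and the two primitive cube roots $\omega,\omega^{2}$ each with multiplicity $2$; and $xy$ has the seven seventh roots of unity, each simple. In particular every element has an eigenvalue multiset closed under inversion, so $W$ is a candidate self-dual module.

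First I would dispose of characteristic $2$. Here $t+1=t-1$ and each $t^{2}-1=(t-1)^{2}$ make $x$ a unipotent involution with Jordan blocks $1,2,2,2$, so its fixed space on $W$ jumps from dimension $3$ to $d^{x}_{W}=4$, while $d^{y}_{W}=3$ and $d^{xy}_{W}=1$ are unchanged. Then $d^{x}_{W}+d^{y}_{W}+d^{xy}_{W}=8>7=\dim W$, whereas irreducibility forces $d^{H_{7}}_{W}=\hat d^{H_{7}}_{W}=0$; this contradicts \eqref{Scott.gen} applied to $V=W$. Hence $\char\F\neq2$, and by Lemma \ref{lem:symp} the module $W$ is self-dual and carries a (necessarily non-degenerate, by irreducibility) invariant symmetric bilinear form $B$, so that $H_{7}\le\SO_{7}(\F)$.

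Next I would produce an invariant $3$-form by applying \eqref{Scott.gen} to $V=\wedge^{3}W$, where $m=\binom{7}{3}=35$. Counting the index-triples $i<j<k$ whose eigenvalue product equals $1$ gives, when $\char\F\notin\{2,3,7\}$, the values $d^{x}_{\wedge^{3}W}=19$, $d^{y}_{\wedge^{3}W}=13$ and $d^{xy}_{\wedge^{3}W}=5$ (the last because exactly five $3$-subsets of $\Z/7\Z$ sum to $0$). Their sum is $37$, so Scott's formula yields $d^{H_{7}}_{\wedge^{3}W}+\hat d^{H_{7}}_{\wedge^{3}W}\ge 2$; since $W$ is self-dual the two terms are equal, whence $d^{H_{7}}_{\wedge^{3}W}\ge1$. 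Thus there is a nonzero $H_{7}$-invariant element of $\wedge^{3}W$, that is, a nonzero invariant alternating trilinear form $f$ on $W$. In characteristics $3$ and $7$ some eigenvalues coincide; there I would recompute the three fixed dimensions directly from the Jordan forms of $x$, $y$, $xy$ on $\wedge^{3}W$ and check that the inequality $d^{H_{7}}_{\wedge^{3}W}\ge1$ survives.

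Finally I would identify the stabiliser. Using $B$ to build the $H_{7}$-equivariant cross product $u\times v$ defined by $B(u\times v,w)=f(u,v,w)$, the pair $(B,f)$ is exactly the datum whose generic stabiliser in $\GL_{7}(\F)$ is $G_{2}(\F)$. The symmetric form $b_{f}$ canonically associated to $f$ is $H_{7}$-invariant, hence a scalar multiple of $B$; were that scalar zero, $f$ would lie in a degenerate $\GL_{7}(\F)$-orbit whose stabiliser preserves a proper nonzero subspace, contradicting the irreducibility of $H_{7}$. Therefore $f$ is generic, $\mathrm{Stab}_{\GL_{7}(\F)}(f)=G_{2}(\F)\cdot\{\lambda I:\lambda^{3}=1\}$, and intersecting with $\SL_{7}(\F)$ (where $\gcd(3,7)=1$ kills the scalar factor) yields $H_{7}\le G_{2}(\F)$. \textbf{The main obstacle} is precisely this last step: establishing genericity of $f$ and the identification $\mathrm{Stab}(f)=G_{2}(\F)$ relies on the orbit classification of alternating $3$-forms on a $7$-dimensional space over an algebraically closed field, together with the verification — delicate in small positive characteristic — that every non-generic orbit has a reducible stabiliser.
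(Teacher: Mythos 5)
Your proposal follows the paper's own route in its essentials: the paper likewise applies Scott's formula \eqref{Scott.gen} to the $35$-dimensional space $\Lambda^3(V^\ast)$ of alternating trilinear forms, with exactly your fixed-point counts $19+13+5=37>35$, concludes that $H_7$ fixes a non-zero alternating trilinear form, and then identifies that form as a Dickson form to get $H_7\le G_2(\F)$. Two of your variations are genuine and correct. First, excluding characteristic $2$ by applying \eqref{Scott.gen} to the natural module itself ($4+3+1=8>7$, while irreducibility of $W$ and of its dual forces $d_W^{H_7}=\hat d_W^{H_7}=0$) is more elementary than the paper's argument, which goes through the symmetric square in Lemma \ref{lem:symp}(ii); since you invoke that lemma anyway for the invariant symmetric form $B$, your separate argument is a pleasant redundancy. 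Second, using self-duality to get $\hat d_{\Lambda^3(V^\ast)}^{H_7}=d_{\Lambda^3(V^\ast)}^{H_7}$ (via $H_7^T=JH_7J^{-1}$ together with the identity $\Phi(h^T)=\Phi(h)^T$), hence $d_{\Lambda^3(V^\ast)}^{H_7}\ge 1$, is valid and lets you bypass the upper bound $d\le 1$ that the paper obtains from \cite[Theorem 5(2)]{A}. Your hedge about characteristics $3$ and $7$ is harmless: only lower bounds on the three fixed-space dimensions are needed, and when an eigenvalue configuration degenerates to a unipotent one the rank of $\Phi(g)-I$ can only drop, so the kernels can only grow (e.g.\ in characteristic $3$ the element $y$ has Jordan blocks $1,3,3$ and still fixes a $13$-dimensional subspace of $\Lambda^3(V^\ast)$).

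The one real gap is exactly the step you flag as the main obstacle. The argument that $b_f=\lambda B$ with $\lambda\neq 0$, and that every non-generic $\GL_7(\F)$-orbit of alternating $3$-forms has a stabiliser preserving a proper non-zero subspace, is morally the right idea, but as written it rests on an orbit classification (Schouten, Cohen--Helminck) that you neither prove nor cite and whose small-characteristic behaviour you explicitly leave unverified; likewise the identification $\mathrm{Stab}_{\GL_7(\F)}(f)=G_2(\F)\cdot\{\lambda I:\lambda^3=1\}$ is asserted rather than established. The paper closes precisely this hole with a single citation: by \cite[Theorem 5(5)]{A}, a non-zero alternating trilinear form fixed by an absolutely irreducible subgroup of $\GL_7(\F)$ must be a Dickson form, in every characteristic, whence $H_7\le G_2(\F)$ at once. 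So your proof is complete modulo replacing the sketched genericity argument by that reference (or by an actual proof of the orbit statement); nothing in your outline would fail, but the last step is not self-contained.
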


\begin{theorem}\label{main}
Suppose that $\F$ is an algebraically closed field of characteristic $2$. Let $H_6$ 
be an irreducible subgroup of $\SL_6(\F)$ generated by a non-rigid 
$(2,3,7)$-triple, i.e., a triple with similarity invariants \eqref{inv}. Then $H_6$ is a subgroup of  $G_2(\F)$.
\end{theorem}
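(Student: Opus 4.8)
The plan is to exhibit $H_6$ inside $G_2(\F)$ by producing the two invariant tensors that pin down $G_2$ in characteristic $2$: a symplectic form and a primitive alternating trilinear form. First I would record the Jordan data forced by \eqref{inv}. Since $\char \F =2$ we have $t^2+1=(t+1)^2$, so $x$ is a unipotent involution with three Jordan blocks of size $2$ and $\dim V^x=3$; the factors $t^3-1$ make $y$ semisimple of order $3$ with eigenvalues $1,\omega,\omega^2$ each of multiplicity $2$, so $\dim V^y=2$; and $t^6+\cdots+1=(t^7-1)/(t-1)$ makes $xy$ regular semisimple of order $7$, with the six primitive seventh roots of unity as simple eigenvalues, so $\dim V^{xy}=0$. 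By Lemma \ref{lem:symp} the triple preserves a nondegenerate alternating form $B$, that is $H_6\le \Sp_6(\F)=\Sp(V,B)$.

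Next I would feed a trilinear module into Scott's formula \eqref{Scott.gen}. The form $B$ yields an $H_6$-equivariant contraction $\wedge^3 V\to V$, and I take $W$ to be its $14$-dimensional kernel of primitive $3$-vectors, sitting in $0\to W\to \wedge^3 V\to V\to 0$. On $\wedge^3 V$ the three fixed dimensions are computed by counting triples of eigenvalues with product $1$: for $xy$ the admissible exponent sets are $\{1,2,4\}$ and $\{3,5,6\}$, giving $d^{xy}_{\wedge^3 V}=2$; for $y$ only the type $\{1,\omega,\omega^2\}$ occurs, giving $d^y_{\wedge^3 V}=2^3=8$; and a direct calculation of the unipotent action gives $d^x_{\wedge^3 V}=10$. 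Because $y$ and $xy$ have order coprime to $2$, the fixed-point functor is exact on the displayed sequence, so $d^y_W=8-2=6$ and $d^{xy}_W=2-0=2$; for the unipotent $x$ only left exactness holds, giving $d^x_W\ge 10-3=7$. Hence
\[
d_W^x + d_W^y + d_W^{xy} \;\ge\; 7 + 6 + 2 \;=\; 15 \;>\; 14 \;=\; \dim W ,
\]
so \eqref{Scott.gen} forces $d_W^{H_6}+\hat d_W^{H_6}\ge 1$. Since $B$ identifies $W$ with its dual as an $H_6$-module, $\hat d_W^{H_6}=d_W^{H_6}$, so in fact $d_W^{H_6}\ge 1$: the group $H_6$ fixes a nonzero primitive alternating trilinear form $f$ on $V$.

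Finally I would identify $\mathrm{Stab}(f)$ with $G_2(\F)$. Irreducibility of $H_6$ shows that $f$ cannot be degenerate, since the radical of a degenerate alternating $3$-form would be a proper nonzero $H_6$-invariant subspace of $V$; thus $f$ lies in the nondegenerate $\Sp_6(\F)$-orbit, which by the characterisation of $G_2$ as the stabiliser of such a trilinear form has stabiliser $G_2(\F)$. Consequently $H_6\le \mathrm{Stab}(f)=G_2(\F)$. The main obstacle is precisely this last step: one must check that the $f$ produced by Scott's formula falls in the nondegenerate orbit and that its full stabiliser in $\Sp_6(\F)$ is exactly $G_2(\F)$ rather than a larger reducible overgroup. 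This is where irreducibility is indispensable and where the characteristic-$2$ subtleties of alternating trilinear forms on a $6$-dimensional symplectic space are concentrated.
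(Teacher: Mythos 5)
Your opening steps are sound: the Jordan data forced by \eqref{inv}, the containment $H_6\le \Sp_6(\F)$ via Lemma \ref{lem:symp}(i), and even the fixed-point counts $d^{x}_{\Lambda^3 V}=10$, $d^{y}_{\Lambda^3 V}=8$, $d^{xy}_{\Lambda^3 V}=2$ are all correct. But the argument breaks at precisely the two points you flagged as delicate, and in a way that cannot be repaired in dimension $6$. First, $W=\ker(\Lambda^3V\to V)$ is \emph{not} self-dual as an $H_6$-module in characteristic $2$: the equivariant map $\psi\colon V\to\Lambda^3V$, $v\mapsto v\wedge\omega$ (with $\omega$ the invariant $2$-vector attached to $B$), satisfies $\varphi\circ\psi=2\cdot\id=0$, so $\psi(V)\subseteq W$, and the $B$-induced pairing on $\Lambda^3V$ restricts to $W$ with radical containing $\psi(V)$. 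Thus $W$ is an extension of the $8$-dimensional module $L(\omega_3)$ by $V$, while $W^\ast$ is an extension of $V$ by $L(\omega_3)$, and these are not isomorphic. Consequently, from $d_W^{H_6}+\hat d_W^{H_6}\ge 1$ you may only conclude that $H_6$ fixes a vector in $W$ \emph{or} in $W^\ast$; the second alternative produces a fixed coset in the quotient $\Lambda^3V/\psi(V)$, not an invariant trilinear form, so the claimed $d_W^{H_6}\ge 1$ does not follow.

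Second, and fatally, the conclusion you aim for is false: the paper's Proposition \ref{notril6} proves that an irreducible subgroup of $\SL_6(\F)$ generated by such a triple fixes \emph{no} nonzero alternating trilinear form on $\F^6$ (indeed, combined with Theorem \ref{main} this shows that $G_2(\F)$ itself, in characteristic $2$, has no invariant alternating trilinear form on its $6$-dimensional module). The characterisation of $G_2$ as the stabiliser of an alternating trilinear form — the Dickson form, via Aschbacher — lives in dimension $7$, not $6$; generic alternating $3$-forms on a $6$-dimensional space have stabilisers of type $\SL_3\times\SL_3$ and the like, never $G_2$. The paper's actual route is therefore through dimension $7$: since $\char\F=2$, one has $\Sp_6(\F)\cong\Omega_7(\F)$ (Dieudonn\'e), the triple is lifted to $H_7\le\Omega_7(\F)$ as in \eqref{shape7}, Scott's formula is applied to $\Lambda^3((\F^7)^\ast)$ of dimension $35$ (where $19+13+5=37$ gives $d+\hat d\ge 2$), and — here your nonexistent $6$-dimensional invariant form reappears with the opposite sign — Proposition \ref{notril6} together with the invariant-subspace analysis of Lemmas \ref{orto} and \ref{invSub} yields $\hat d\le 1$, hence $d\ge 1$: $H_7$ fixes a nonzero trilinear form on $\F^7$. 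Aschbacher's criterion (indecomposability of the action on $\F^7$, plus $\rad(B_{v_0})=\langle v_0\rangle$, the latter verified again by Proposition \ref{notril6}) then identifies this form as a Dickson form and gives $H_7\le G_2(\F)$.
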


These two theorems allow to complete the classification of the finite simple groups, admitting 
absolutely irreducible projective representations of degree $n\le 7$, which are Hurwitz. 
Among such groups, those which are generated by rigid triples, in particular those which admit irreducible projective representations of degree $n\leq 5$, have already been classified (see \cite{TV1}).
Thus we are left with those  generated by irreducible non-rigid triples. 
For $n=6$ and $\char \F\neq 2$, by Lemma \ref{lem:symp}(i) they are contained in $\Sp_6(q)$, $q$ odd,
where $q$ stands for the order of a finite field.
Otherwise,  they are contained in $G_2(q)$ by Theorems \ref{p7}  and \ref{main}.
Thus the classification can be completed analyzing  $\Sp_6(q)$,
$G_2(q)$ and their subgroups. 

The group  $\PSp_6(q)$ is not Hurwitz for $q=3$ and for $q$ even (see \cite{VZ}; for $q$ even this fact also follows from Theorem \ref{main}).
Hurwitz generators of $\PSp_6(q)$, $q\ge 5$ odd, are given in \cite{TV}.
On the other hand, by an old, non-constructive result  of G. Malle, $G_2(q)$ itself is a Hurwitz group if, and 
only if, $q\geq 5$. 
Explicit $(2,3,7)$-generators for the  Ree groups $^2G_2\left(3^{2a+1}\right)$ can be found in \cite{T}.

For the reader's convenience
we state  the full classification.
Given an integer $u$ and a prime $p$, coprime to $u$, we denote by $\mathrm{o}_u(p)$ the order of 
$p\pmod u$, i.e. the order of $p+u\Z$ seen as an element of the group $\left(\Z / u\Z\right)^\ast$.

\begin{theorem}\label{Hurwitzlist}
Let $G$ be a finite simple group admitting an absolutely irreducible projective representation of degree $n\le 
7$. Then $G$ is Hurwitz if, and only if, it is isomorphic to one of the following:
\begin{itemize}
\item[{\rm (i)}] $\PSL_2(p)$ when $p\equiv 0,\pm 1\pmod 7$, $\PSL_2(p^{3})$ when $p\equiv \pm 2, \pm 3\pmod 7$
\cite{Mac}; 
\item[{\rm (ii)}] $\PSL_5(p^{n_5})$ when $p\ne 5,7$  and $n_5$ is odd, $\PSU_5(7^4)$ and $\PSU_5(p^{n_5})$ when $p\ne 5,7$ 
and $n_5$ is even, where $n_5$ is $\mathrm{o}_5(p)\cdot \mathrm{o}_7(p^2)$  \cite{TZ};
\item[{\rm (iii)}] $\PSL_6(p^{n_6})$ when $p\ne 3$ and $n_6$ is odd, $\PSU_6(p^{n_6})$ when $p\ne 3$ and
$n_6$ is even, where $n_6=\mathrm{o}_9(p)$ \cite{TV1};
\item[{\rm (iv)}] $\PSL_7(p^{n_7})$ when $p\ne 7$ and $n_7$ is odd, $\PSU_7(p^{n_7})$ when $p\ne 7$ and
$n_7$ is even, where $n_7=\mathrm{o}_{49}(p)$ \cite{TV1};
\item[{\rm (v)}] $\PSp_6(q)$ when $q\geq 5$ is odd \cite{TV}; 
\item[{\rm (vi)}] $G_2(q)$ when $q\geq 5$ and $^2G_2\left(3^{2a+1}\right)$ for all $a\geq 1$ \cite{Ma};
\item[{\rm (vii)}]  $J_1$ \cite{Sah}, $J_2$ \cite{FRu}.
\end{itemize}
\end{theorem}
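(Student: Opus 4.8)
The plan is to assemble the classification from the rigidity dichotomy of \cite{TV1} together with the two main theorems, reducing every candidate to a finite list of matrix groups whose simple Hurwitz subgroups are already catalogued in the literature. I would split the argument into the two implications. For the \emph{if} direction there is nothing new to do: each group in the list is known to be Hurwitz by the reference attached to it. For the \emph{only if} direction, suppose $G$ is simple and Hurwitz, and fix $n\le 7$ minimal such that $G$ admits an absolutely irreducible projective representation of degree $n$ over some algebraically closed $\F$. A Hurwitz generating pair of $G$ lifts to an irreducible $(2,3,7)$-triple $(x,y,xy)$ in $\GL_n(\F)$, so $G$ is a central quotient of the group generated by such a triple, and everything reduces to classifying these triples.

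Next I would invoke the classification of admissible similarity invariants in \cite{TV1}. For $n\le 5$ every irreducible $(2,3,7)$-triple is rigid, and the rigid analysis of \cite{TV1} (with \cite{Mac} for $n=2$ and \cite{TZ} for $n=5$) identifies the groups generated, producing items (i) and (ii). For $n=6,7$ one separates the rigid triples, which generate the linear and unitary groups of items (iii) and (iv), from the non-rigid ones with invariants \eqref{inv} or \eqref{inv7}. Here the decisive work is done by the results proved earlier: a non-rigid degree-$6$ triple with $\char\F\neq 2$ lies in $\Sp_6(q)$ with $q$ odd by Lemma \ref{lem:symp}(i); a non-rigid degree-$6$ triple in characteristic $2$ lies in $G_2(\F)$ by Theorem \ref{main}; and every non-rigid degree-$7$ triple forces $\char\F\neq 2$ and lies in $G_2(\F)$ by Theorem \ref{p7}.

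Finally I would descend to the finite level and enumerate the simple Hurwitz subgroups of $\Sp_6(q)$ and $G_2(q)$. For the symplectic case this yields $\PSp_6(q)$ with $q\ge 5$ odd (item (v)), using \cite{TV} for existence and \cite{VZ} to exclude $q=3$ and $q$ even. For $G_2$ I would use Malle's result \cite{Ma} that $G_2(q)$ is Hurwitz exactly when $q\ge 5$, and then scan the (maximal) subgroup structure of $G_2(q)$ to locate the remaining simple Hurwitz subgroups: the Ree groups $^2G_2(3^{2a+1})$ (item (vi), with explicit generators in \cite{T}) and the Janko groups $J_1$ \cite{Sah} and $J_2$ \cite{FRu} (item (vii)), which embed in $G_2(q)$ for suitable $q$.

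I expect this last enumeration to be the main obstacle. The representation-theoretic reduction of the previous steps cleanly confines the candidates to subgroups of $\Sp_6(q)$ and $G_2(q)$, but deciding that \emph{no further} small simple group occurs as a Hurwitz subgroup — and that each listed exceptional subgroup genuinely embeds and is genuinely Hurwitz — depends on the detailed subgroup lattices of these groups rather than on any general principle, and this is where the bulk of the case analysis lies.
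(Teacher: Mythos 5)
Your proposal is correct and takes essentially the same route as the paper: the rigidity dichotomy of \cite{TV1} accounts for items (i)--(iv), Lemma \ref{lem:symp}(i) together with Theorems \ref{p7} and \ref{main} confines the non-rigid triples to $\Sp_6(q)$ with $q$ odd and to $G_2(q)$, and the descent to items (v)--(vii) then rests, exactly as in the paper, on \cite{TV}, \cite{VZ}, \cite{Ma} and the known subgroup structure of these groups (which is where the Ree groups, $J_1$ and $J_2$ appear). Your closing observation that the subgroup-lattice scan carries the residual case analysis matches the paper's own reliance on the maximal-subgroup classifications.
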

The groups in items {\rm (i)}-{\rm (iv)} of previous theorem are all generated by rigid triples.
Clearly rigidity restricts the isomorphism types: at most one for fixed $p$ and $n$.
For an interesting discussion of this aspect we refer to \cite{M}.

In the last Section  we provide explicit Hurwitz generators of $G_2(q)$, $q\geq 5$, 
and of the Janko groups $J_1$ and  $J_2$, respectively over $\F_{11}$ and $\F_4$.

\subsection*{Acknowledgments}

We are indebted to Maxim Vsemirnov for useful suggestions and discussions.

\section{Preliminary results}\label{prelim}

Let  $(x_n,y_n,x_n y_n)$ be an irreducible triple in $\GL_n(\F)^3$, where $\F$ is an algebraically closed field,
and set $H_n=\left\langle x_n,y_n\right\rangle$. We consider the diagonal action of $\GL_n(\F)$ on the space $\F^n\otimes \F^n$,
identified with $\Mat_n(\F)$, namely the action $B\mapsto gBg^T$ for all $B\in \Mat_n(\F)$, $g\in\GL_n(\F)$.  
The symmetric square $S=S(\F^n)$ and the exterior 
square power $\Lambda^ 2=\Lambda^2(\F^n)$ can be identified, respectively, with the spaces
of symmetric matrices
and antisymmetric matrices with zero-diagonal. The spaces $S$ and  $\Lambda^ 2$, being $\GL_n(\F)$-invariant,
give rise to representations of $H_n$ on $\F$ of respective degrees $\frac{n(n+1)}{2}$ and
$\frac{n(n-1)}{2}$.
By \cite[Lemma 1(i)]{TV1},  in the first representation one has
\begin{equation}\label{dhat}
\hat d_{S}^ {H_n}\le d_{S}^ {H_n} \le 1.
\end{equation}
Here the  notation is the same defined at the beginning of  the Introduction.

\begin{lemma}\label{lem:symp}
In the above notation, the following facts hold:
\begin{itemize}
\item[\rm{(i)}] If $n=6$ and  $(x_6,y_6,x_6y_6)$ is an irreducible triple in $\GL_6(\F)^3$ with similarity invariants \eqref{inv},
then $H_6=\langle x_6,y_6\rangle$ is contained in $\Sp_6(\F)$.
\item[\rm{(ii)}] If $n=7$ and  $(x_7,y_7,x_7y_7)$ is an irreducible triple in $\GL_7(\F)^3$ 
with similarity invariants \eqref{inv7}, then $\char\F \neq 2$ and $H_7=\langle x_7,y_7\rangle$ is contained in $\Omega_7(\F)$.
\end{itemize}
\end{lemma}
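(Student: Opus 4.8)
The plan is to feed into Scott's formula \eqref{Scott.gen} the representations of $H_n$ on the two submodules $S$ and $\Lambda^2$ of $\Mat_n(\F)$, whose $H_n$-fixed points are exactly the invariant symmetric (resp.\ alternating) bilinear forms on $\F^n$. First I would read off, from the similarity invariants \eqref{inv} and \eqref{inv7}, the eigenvalues with multiplicities (in the separable case) or the Jordan block sizes (for the unipotent elements arising in characteristic $2$) of $x_n,y_n,x_ny_n$. For semisimple $g$ with eigenvalues $\lambda_1,\dots,\lambda_n$ one has $d_{\Lambda^2}^g=\#\{i<j:\lambda_i\lambda_j=1\}$ and $d_S^g=d_{\Lambda^2}^g+\#\{i:\lambda_i^2=1\}$, since $g$ scales $e_i\wedge e_j$ (resp.\ $e_ie_j+e_je_i$, $e_i^2$) by $\lambda_i\lambda_j$; for unipotent $g$ one instead uses $\dim(\Lambda^2 J_m)^{g}=\lfloor m/2\rfloor$ together with $\dim(J_a\otimes J_b)^g=\min(a,b)$ and the characteristic-free splitting $\Lambda^2(\bigoplus_k V_k)=\bigoplus_k\Lambda^2 V_k\oplus\bigoplus_{k<l}V_k\otimes V_l$. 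Two facts control the right-hand side of \eqref{Scott.gen}: the bound \eqref{dhat} on $S$, and the observation that absolute irreducibility of $H_n$ (and of its contragredient) forces the space of invariant bilinear forms to be at most one-dimensional, so that $d_{\Lambda^2}^{H_n},\hat d_{\Lambda^2}^{H_n}\le 1$.

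For part (i) I would work with $V=\Lambda^2$, $\dim V=15$, in any characteristic. The block computation gives $d_{\Lambda^2}^{x_6}=9$, $d_{\Lambda^2}^{y_6}=5$ and $d_{\Lambda^2}^{x_6y_6}=3$; notably these values are unchanged when $x_6$ degenerates to the unipotent element $J_2^{\oplus3}$ in characteristic $2$. Scott's formula then reads $17\le 15+d_{\Lambda^2}^{H_6}+\hat d_{\Lambda^2}^{H_6}$, and since each summand is at most $1$ we get $d_{\Lambda^2}^{H_6}=1$. Thus $H_6$ preserves a nonzero alternating form on $\F^6$; its radical is $H_6$-invariant, hence zero by irreducibility, so the form is nondegenerate and $H_6\le\Sp_6(\F)$.

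For part (ii) assume first $\char\F\neq2$ and take $V=S$, $\dim V=28$. The eigenvalue counts give $d_S^{x_7}=16$, $d_S^{y_7}=10$, $d_S^{x_7y_7}=4$, so \eqref{Scott.gen} becomes $30\le 28+d_S^{H_7}+\hat d_S^{H_7}$. Combined with \eqref{dhat}, that is $\hat d_S^{H_7}\le d_S^{H_7}\le1$, this forces $d_S^{H_7}=\hat d_S^{H_7}=1$: $H_7$ preserves a nonzero symmetric form, nondegenerate by irreducibility, whence $H_7\le\O_7(\F)$. Since $\F$ is algebraically closed the spinor norm is trivial, so $\Omega_7(\F)=\SO_7(\F)=\O_7(\F)\cap\SL_7(\F)$; as $H_7\le\SL_7(\F)$ we conclude $H_7\le\Omega_7(\F)$.

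It remains to exclude $\char\F=2$ in part (ii), which I expect to be the main obstacle: there $\Mat_7$ no longer splits as $S\oplus\Lambda^2$ (indeed $\Lambda^2\subset S$), and $x_7$ becomes the unipotent element with Jordan blocks $[1,2,2,2]$. Working on the genuine submodule $V=\Lambda^2$, $\dim V=21$, the unipotent formula yields $d_{\Lambda^2}^{x_7}=3+9=12$, while $y_7$ and $x_7y_7$ remain semisimple with $d_{\Lambda^2}^{y_7}=7$ and $d_{\Lambda^2}^{x_7y_7}=3$. Then \eqref{Scott.gen} gives $22\le 21+d_{\Lambda^2}^{H_7}+\hat d_{\Lambda^2}^{H_7}$, so at least one of $d_{\Lambda^2}^{H_7},\hat d_{\Lambda^2}^{H_7}$ is positive; hence $H_7$, or its (equally irreducible) contragredient, preserves a nonzero alternating form on a space of odd dimension $7$. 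In characteristic $2$ such a form has even rank, so nontrivial radical, and this radical is a proper nonzero invariant subspace, contradicting irreducibility. Therefore $\char\F\neq2$. The delicate points to get right are precisely the characteristic-$2$ fixed-point counts for the unipotent $x_7$ on $\Lambda^2$ and the careful bookkeeping of $\hat d$ against $d$ on the right-hand side of Scott's formula.
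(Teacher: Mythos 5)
Your proposal is correct, and in two places it is genuinely more self-contained than the paper's own proof. For part (i) the paper argues on $\Lambda^2$ exactly as you do, but \emph{only} when $\char\F\neq 2$ (with $d_{\Lambda^2}^{x_6}=9$, $d_{\Lambda^2}^{y_6}\ge 5$, $d_{\Lambda^2}^{x_6y_6}\ge 3$ on $\dim\Lambda^2=15$); for $\char\F=2$ it switches to the $21$-dimensional space $S$ of symmetric matrices, where Scott's formula only yields $d_S^{H_6}+\hat d_S^{H_6}\ge 1$, and it must invoke \cite[Lemma 1(ii)-(iii)]{TV1} to convert the ambiguous outcome $(d_S^{H_6},\hat d_S^{H_6})\in\{(1,1),(1,0)\}$ into symplecticity. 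Your observation that the degeneration of $x_6$ to $J_2^{\oplus 3}$ in characteristic $2$ still gives $d_{\Lambda^2}^{x_6}=9$ (via $\Lambda^2(J_2)$ and $J_2\otimes J_2$) makes the $\Lambda^2$ argument uniform in all characteristics and removes the external citation; your counts check out, and your flat equalities $d_{\Lambda^2}^{y_6}=5$, $d_{\Lambda^2}^{x_6y_6}=3$ are harmless even in the degenerate characteristics $3$ and $7$ (where the paper hedges with $\ge$), since only lower bounds enter the left-hand side of \eqref{Scott.gen}. For part (ii) the paper likewise extracts both $\char\F\neq 2$ and orthogonality in one stroke from $d_S^{H_7}+\hat d_S^{H_7}\ge 2$ by citing \cite[Lemma 1]{TV1}; your characteristic-$2$ exclusion --- running Scott on $\Lambda^2$ with $x_7$ unipotent of type $[1,2,2,2]$, so $12+7+3=22\le 21+d_{\Lambda^2}^{H_7}+\hat d_{\Lambda^2}^{H_7}$, and then killing the resulting invariant alternating form on a $7$-dimensional space because its rank is even and its radical is a proper nonzero invariant subspace of $H_7$ or of the equally irreducible $H_7^T$ --- is a clean self-contained replacement, and your endgame (over algebraically closed $\F$ the spinor norm is trivial, so $\O_7(\F)\cap\SL_7(\F)=\SO_7(\F)=\Omega_7(\F)$, with $\det x_7=\det y_7=1$ forced by \eqref{inv7}) substitutes correctly for the paper's appeal to perfectness of $(2,3,7)$-generated groups. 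One point you should make explicit: the bound $\hat d_{\Lambda^2}^{H_n}\le 1$ needs $\hat d_{\Lambda^2}^{H_n}=d_{\Lambda^2}^{H_n^T}$, i.e. that the matrix of $\Lambda^2(g)$ in the standard basis transposes to that of $\Lambda^2(g^T)$. The paper deduces this, for $\char\F\neq 2$, from the splitting $\F^n\otimes\F^n=S\oplus\Lambda^2$, which is unavailable in characteristic $2$ --- precisely where your argument runs; the fix is the compound-matrix (minors) identity, which is the very computation the paper carries out for $\Lambda^m(V^\ast)$ in Section 3 and which is valid in every characteristic, so your approach is sound once this is said.
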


\begin{proof}
(i) If $\char \F\neq 2$ we consider the action 
of $H_6$ on $\Lambda^ 2=\Lambda^2(\F^6)$, as in \cite{V3}.
Then $\F^6\otimes \F^6= S\oplus \Lambda^2$, whence
$\hat d_{\Lambda ^2}^{H_6}=d_{\Lambda ^2}^ {H_6^T}$. From $d_{\Lambda ^2}^ {x_6}= 9$, $d_{\Lambda ^2}^{y_6}\ge 5$, $d_{\Lambda ^2}^ {x_6y_6}\ge 3$
and  $\dim \Lambda^2 =15$,  we get $d_{\Lambda ^2}^ {H_6}+ \hat d_{\Lambda ^2}^{H_6}\ge 2$.
Since  $d_{\Lambda ^2}^ {H_6}\le  1$ and $d_{\Lambda ^2}^ {H_6^T}\le 1$, by the irreducibility of  $H_6$,
it follows $d_{\Lambda ^2}^ {H_6}= d_{\Lambda ^2}^ {H_6^T}= 1$. Hence $H_6$ fixes a non-zero alternating 
antisymmetric form $J$, which is non-degenerate by the irreducibility of $H_6$. 
We conclude $H_6\le \Sp_6(\F)$.

Now assume  $\char \F=2$. Then $d_{S}^ {x_6}= 12$, $d_{S}^{y_6}= 7$ and $d_{S}^ {x_6y_6}= 3$.
Since $\dim S=21$, from  \eqref{dhat} and Scott's formula \eqref{Scott.gen}  it follows  either 
$d_S^{H_6}= \hat d_S^ {H_6}= 1$ or $d_S^ {H_6}=1$ and $\hat d_S^{H_6}=0$.
In the first case $H_6\le \Omega_6(\F)\le \Sp_6(\F)$, in the second $H_6\le \Sp_6(\F)$  by \cite[Lemma 1(ii)-(iii)]{TV1}.

(ii) We have  $d_{S}^{x_7}= 16$, $d_{S}^{y_7}= 10$, $d_{S}^ 
{x_7y_7}= 4$.  Since $\dim S=28$, 
from  \eqref{Scott.gen} we get $d_S^{H_7}+\hat d_S^ {H_7}\ge 2$. Again \cite[Lemma 1]{TV1} gives 
$\char \F\neq 2$,
$d_S^{H_7}= \hat d_S^ {H_7}= 1$, and $H_7$ orthogonal. Since $H_7$ is $(2,3,7)$-generated, it coincides with its derived subgroup and so it is contained in $\Omega_7(\F)$.\hfill $\square$
\end{proof}

When $\char \F=2$, $\Sp_{6}(\F)$ is isomorphic to the orthogonal group $\Omega_7(\F)$, by a classical result of Dieudonn\'e \cite{Dieu}. Indeed, call  $Q$ the quadratic form  which defines $\Omega_{7}(\F)$,  call
$g$ the associated  symmetric bilinear form  of rank $6$ and 
$\left\langle v_0\right\rangle$ the radical of $g$. Then $\Omega_{7}(\F)$ fixes $\left\langle v_0\right\rangle$
and induces a symplectic group on $\frac{\F^{7}}{\left\langle v_0\right\rangle}$.
Since every symplectic transformation has determinant $1$,  $\Omega_{7}(\F)$ fixes $v_0$.
Hence, with respect to any basis of $\F^{7}$ having $v_0$ as first vector, $\Omega_{7}(\F)$ consists of
matrices 
\begin{equation}\label{Omega7}
h=\begin{pmatrix} 1 & a_{h}^T \\ 0 &  h_{6} \end{pmatrix},\ 
a_h\in \F^{6}, \ h_{6}\in \Sp_{6}(\F).
\end{equation}
The map $h\mapsto h_{6}$ is an isomorphism. We use its inverse $h_6\mapsto h$. The image $(\tilde x_7, \tilde y_7, \tilde x_7\tilde y_7)$ of $(x_6,y_6, x_6y_6)$ in $\Omega_{7}(\F)$ has
similarity invariants  \eqref{inv7}. Since the eigenspace of $\tilde x_7\tilde y_7$ relative to $1$ has dimension 
$1$, it must coincide with $\left\langle v_0\right\rangle$. Let 
$\mathcal{B}=\{v_0, v_1, v_2,v_{-3}, v_{-1},$ $v_{-2}, v_{3}\}$ be a basis  of 
eigenvectors of $\tilde x_7\tilde y_7$, with $(\tilde x_7\tilde y_7)v_i=\eps^{-i}v_i$ for $i=0,\pm 1, \pm 2,$ $\pm 3$,
where $\eps\in \F$ is a primitive $7$-th root of $1$.
With respect to $\mathcal{B}$ the Gram matrix $J$
of the bilinear form $g$ fixed by $\left\langle \tilde x_7,\tilde y_7\right\rangle$ is
$J=\begin{pmatrix} 0& 0\\0&J_6\end{pmatrix}$ with $J_6$ fixed
by $\left\langle x_6,y_6\right\rangle$. 
Multiplying $v_1,v_2, v_{-3}$ by scalar multiples, if necessary, 
we have $J_6=\begin{pmatrix} 0& I_3\\I_3&0\end{pmatrix}$.
Hence we may suppose:
\begin{equation}\label{shape7}
\tilde x_7=\begin{pmatrix} 1 & a_{x}^T \\ 0 &  x_6  \end{pmatrix}, \quad \tilde y_7 =
\begin{pmatrix} 1 &a_{x}^T x_6y_6\\
 0 &  y_6  \end{pmatrix},\quad 
\tilde x_7\tilde y_7 =\begin{pmatrix} 1 &
0\\ 0 &  x_6y_6  \end{pmatrix} .
\end{equation}
The conditions $\tilde x_7^2=I$, $J_6x_6=x_6^TJ_6$ and the choice of $\mathcal{B}$ give 
$a_{x}^Tx_6=a_{x}^T$ and
\begin{equation}\label{shapex6}
\left.\begin{array}{c}
x_6=
\begin{pmatrix} A &B\\
C&A^T \end{pmatrix},\ B=B^T, C=C^T, \\
x_6y_6= \diag\left(\eps^{-1},\eps^{-2},\eps^{3},\eps^{1},\eps^{2},\eps^{-3} \right).
\end{array}\right.
\end{equation}

\section{Proofs of Theorems \ref{p7} and \ref{main}}

The  group $G_2(\F)$ can be defined as the subgroup of $\GL_7(\F)$ fixing a particular alternating trilinear form, called the Dickson form (see \cite{A}). For the reader's convenience, we recall  some basic facts and definitions concerning alternating $m$-linear forms on $V=\F^n$  ($n\geq m$). 

An $m$-linear map $f: V^m\rightarrow \F$ is said to be alternating if
$f(v_1,\ldots,v_m)=0$ whenever $v_i=v_j$ for some $i\neq j$.
We define 
$$\wedge^m(V)=\frac{\otimes_m V}{\langle v_1\otimes\ldots\otimes v_m: v_i=v_j  \textrm{ for some } i\neq j\rangle}.$$
By $v_1\wedge \ldots \wedge v_m$ we denote the image of  $v_1\otimes \ldots \otimes v_m$ under the canonical projection of $\otimes_m V$ onto $\wedge^m(V)$.  
The spaces $\wedge^m(V)$ and $\wedge^m(V^\ast)$, where $V^\ast$ denotes the dual of $V$, are dual to each other,
under the bilinear map
$$(v_1^\ast\wedge \ldots\wedge v_m^\ast, u_1\wedge \ldots \wedge u_m)\mapsto \det \begin{pmatrix}
v_k^T u_j
\end{pmatrix}$$
(e.g., see \cite[Chapter XIX]{Lang}). Hence, the space of alternating $m$-linear forms on $V$  can be
identified with the skew-symmetric tensor product $\wedge^m(V^\ast)$ acting on $V^m$  via:
\begin{equation}
(v_1^\ast \wedge\ldots\wedge
v_m^\ast)(u_1,\ldots,u_m)= \det \begin{pmatrix}
v_k^T u_j
\end{pmatrix}.
\end{equation}
We may define a representation $\Phi: \GL(V)\rightarrow \GL\left(\wedge^m(
V^\ast)\right)$ by setting:
$$[\Phi(g)(v_1^\ast \wedge \ldots\wedge
v_{m}^\ast)](u_1,\ldots,u_m)=(v_1^\ast \wedge\ldots\wedge
v_{m}^\ast)(g^{-1} u_1,\ldots,g^{-1}u_m),$$
namely
$$
\Phi(g)(v_1^\ast \wedge\ldots\wedge
v_{m}^\ast)=\left(g^{-T}v_1\right)^\ast \wedge \ldots\wedge
\left(g^{-T} v_m\right)^\ast.
$$ 

We claim that $\Phi(g^T)=\Phi(g)^T$ for all $g$. In particular, the representation $g\mapsto \Phi\left(g^{-T}\right)$
is the dual of $\Phi$.
Indeed, let $\{e_1,\ldots,e_n\}$ and $\{e_1^\ast,\ldots,e_n^\ast\}$
denote respective bases for $V$ and for the dual $V^\ast$ of $V$. 
Setting
$$\mathcal{B}=\left\{e_{i_1}^\ast \wedge\ldots\wedge e_{i_m}^\ast \mid\;  1\leq
i_1< i_2<
\ldots < i_m\leq n\right\}$$
we obtain a basis of $\wedge^m(V^\ast)$.
By the above
$$
\Phi(g)\left(e_{i_1}^\ast \wedge\ldots\wedge
e_{i_m}^\ast\right)=
\left( g^{-T} e_{i_1}\right)^\ast \wedge \ldots\wedge
\left( g^{-T}e_{i_m}\right)^\ast$$ 
$$=\sum_{1\le k_1\le\dots \le k_m\le n} \det \left(\gamma_{i_1,\dots ,i_m}^{k_1,\dots
,k_m}\right)\left(e_{k_1}^\ast \wedge\ldots\wedge e_{k_m}^\ast\right),
$$
where  $\gamma_{i_1,\dots ,i_m}^{k_1,\dots ,k_m}$ is the submatrix
obtained from $g^{-T}$ considering  rows $i_1,\dots ,i_m$ and columns $k_1,\dots
,k_m$.
The same calculation leads to
$$
\Phi(g^T)\left(e_{i_1}^\ast \wedge\ldots\wedge e_{i_m}^\ast\right) =
\sum_{1\le k_1\le\dots \le k_m\le n} \det \left(\gamma_{k_1,\dots ,k_m}^{i_1,\dots
,i_m}\right)\left(e_{k_1}^\ast \wedge\ldots\wedge e_{k_m}^\ast\right).$$
Identifying $\Phi(g)$ and $\Phi\left(g^T\right)$ with their matrices with respect to
$\mathcal{B}$
we have $\Phi\left(g^T\right)=\Phi(g)^T$, whence our claim.

When $m=3$ and $n=7$ we may take a basis $\{v_0,v_1,v_2,v_3,v_{-1},v_{-2},v_{-3}\}$ and consider the following alternating trilinear form, called a Dickson form:
$$v_0^\ast\wedge v_1^\ast\wedge v_{-1}^\ast+v_0^\ast\wedge v_2^\ast\wedge v_{-2}^\ast
+v_0^\ast\wedge v_3^\ast\wedge v_{-3}^\ast +v_1^\ast\wedge v_2^\ast\wedge v_{-3}^\ast+
v_{-1}^\ast\wedge v_{-2}^\ast\wedge v_{3}^\ast.$$
\smallskip

Our proofs of Theorems \ref{p7} and \ref{main} are based on key results of Aschbacher \cite[Theorem 5]{A}. 
The case $n=7$ turns out to be much easier to handle than the case $n=6$. For this reason we consider it firstly.

\begin{proof}[Proof of Theorem \ref{p7}]
Let $H_7$ be an irreducible subgroup of $\SL_7(\F)$, generated by a 
non rigid $(2,3,7)$-triple $(x_7,y_7,x_7y_7)$. Its similarity invariants are
those in \eqref{inv7} by \cite{TV2}.  
Moreover char $\F\ne 2$ and
$H_7\le \Omega_7(\F)$ (see Lemma \ref{lem:symp}). 
Consider the space  $V=\F^7$ with basis $\{e_1,\ldots,e_7\}$. 
As done before, we may identify the 
space of the alternating trilinear forms defined on $V$ with the space $\wedge^3
(V^\ast)$ and consider the representation $\Phi: \GL(V)\rightarrow \GL\left(\wedge^3
(V^\ast)\right)$ previously introduced. 
We apply formula \eqref{Scott.gen} to this action:
$$d_{\Lambda^ 3(V^\ast)}^{x_7}+d_{\Lambda^ 3(V^\ast)}^{y_7}+d_{\Lambda^ 3(V^\ast)}^{x_7y_7} \leq 35
+ d_{\Lambda^ 3(V^\ast)}^{H_7}+\hat d_{\Lambda^ 3(V^\ast)}^{H_7}.$$

Considering the  canonical forms of $x_7$, $y_7$ and $x_7y_7$ we obtain $d_{\Lambda^ 3(V^\ast)}^{x_7}=
19$, $d_{\Lambda^ 3(V^\ast)}^{y_7}= 13$ and $d_{\Lambda^
3(V^\ast)}^{x_7y_7}=5$, whence 
$$d_{\Lambda^ 3(V^\ast)}^{H_7}+\hat d_{\Lambda^ 3(V^\ast)}^{H_7} \geq 2.$$
As $\Phi(h)^T=\Phi(h^T)$ for all $h\in H_7$ we obtain that $ \hat d_{\Lambda^
3(V^\ast)}^{H_7}= d_{\Lambda^ 3(V^\ast)}^{H_7^T}$.
Further, since $H_7$ and $H_7^T$ are absolutely irreducible, by \cite[Theorem 5(2)]{A}, 
$$0\leq d_{\Lambda^ 3(V^\ast)}^ {H_7}\le 1, \quad  0\le d_{\Lambda^ 3(V^\ast)}^{H_7^T}\leq 1.$$
It follows that  $d_{\Lambda^ 3(V^\ast)}^ {H_7}= \hat d_{\Lambda^ 3(V^\ast)}^{H_7}=1$.
Thus $H_7$ fixes a non-zero alternating trilinear form which, by \cite[Theorem 5(5)]{A}, must be a Dickson form.
We conclude that $H_7\leq G_2(\F)$.\hfill $\square$
\end{proof}

We now turn to the proof of Theorem \ref{main} and so, from now on, we assume $\char \F=2$ and keep notation  \eqref{shape7}. 
A fundamental step of the proof is that, whenever $H_6=\left\langle x_6,y_6\right\rangle$ is absolutely
irreducible,
then $H_7=\left\langle\tilde x_7,\tilde  y_7\right\rangle$
fixes a non-zero alternating trilinear form.
This will be done in Proposition \ref{Scott}, that requires some preliminary facts established in Proposition \ref{notril6} and Lemmas \ref{orto} and \ref{invSub}.

\begin{propo}\label{notril6}
Let $K$ be an irreducible subgroup  of $\SL_6(\F)$ generated by 
a $(2,3,7)$-triple whose similarity
invariants are \eqref{inv}. 
Then $K$ does not fix any non-zero alternating trilinear form.
\end{propo}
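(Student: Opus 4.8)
The plan is to argue by contradiction, exploiting the order-$7$ element together with irreducibility, and avoiding the explicit entries of $x_6,y_6$ almost entirely. Set $V=\F^6$ and suppose $K$ fixes a non-zero alternating trilinear form $f\in\wedge^3(V^\ast)$. The element $z=x_6y_6$ has similarity invariant $t^6+t^5+t^4+t^3+t^2+t+1$, hence is diagonalizable with the six distinct primitive $7$-th roots of unity as eigenvalues; in the eigenbasis of \eqref{shapex6} one has $zv_i=\eps^{-i}v_i$ for $i=\pm1,\pm2,\pm3$. Consequently each decomposable $v_i^\ast\wedge v_j^\ast\wedge v_k^\ast$ is a $z$-eigenvector, fixed precisely when $i+j+k\equiv 0\pmod 7$. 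First I would check, by the elementary count of $3$-subsets of $\{\pm1,\pm2,\pm3\}$ summing to $0$ modulo $7$, that there are exactly two such triples, namely $\{1,2,-3\}$ and $\{-1,-2,3\}$. Thus $d_{\wedge^3(V^\ast)}^{z}=2$ and, since $f$ is $z$-invariant,
$$f=\alpha\,(v_1^\ast\wedge v_2^\ast\wedge v_{-3}^\ast)+\beta\,(v_{-1}^\ast\wedge v_{-2}^\ast\wedge v_3^\ast)=\alpha f_1+\beta f_2,$$
for some $\alpha,\beta\in\F$ not both zero.

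Next I would read $f_1$ and $f_2$ geometrically: $f_1$ is a volume form on $W_1=\langle v_1,v_2,v_{-3}\rangle$ vanishing on $W_2=\langle v_{-1},v_{-2},v_3\rangle$, and symmetrically for $f_2$, with $V=W_1\oplus W_2$. If one coefficient vanishes, say $\beta=0$, then $f=\alpha f_1$ is degenerate with radical $\{v:\iota_vf=0\}=W_2$, a proper non-zero $K$-invariant subspace, contradicting the irreducibility of $K$. So I may assume $\alpha\beta\neq0$. Writing $v=v'+v''$ with $v'\in W_1$, $v''\in W_2$, one has $\iota_vf=\alpha\,\iota_{v'}f_1+\beta\,\iota_{v''}f_2$, a sum of an alternating $2$-form supported on $W_1$ and one supported on $W_2$; its rank is the sum of the two ranks. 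Since $\iota_{v'}f_1$ has rank $2$ for every $v'\neq0$ (and likewise on $W_2$), the locus $\{v:\rk(\iota_vf)=2\}$ equals $(W_1\cup W_2)\setminus\{0\}$. This locus is intrinsic to $f$, hence $K$-invariant, so $K$ permutes the pair $\{W_1,W_2\}$.

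To close the argument I would use that the induced action of $K$ on $\{W_1,W_2\}$ factors through the projective image $\overline K$, because scalar matrices fix both subspaces; as $\overline K$ is $(2,3,7)$-generated it is perfect, so this permutation action is trivial and $K$ stabilizes $W_1$. This again contradicts the irreducibility of $K$, and therefore no non-zero $K$-invariant trilinear form can exist.

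The step I expect to be the main obstacle is the recovery of $\{W_1,W_2\}$ from $f$ alone: one must verify carefully, in characteristic $2$, that for $\alpha\beta\neq0$ the rank-$2$ locus of $v\mapsto\iota_vf$ is exactly $W_1\cup W_2$ (equivalently, that $\iota_{v'}f_1$ is genuinely of rank $2$ for $v'\neq0$, so that no ``mixed'' vector $v'+v''$ produces a rank-$2$ contraction). The remaining ingredients---the eigenvalue bookkeeping for $z$, the two-dimensionality of the $z$-fixed space, and the perfectness of a $(2,3,7)$-generated group---are routine, and notably this route never needs the precise form of the blocks $A,B,C$ in \eqref{shapex6}, only the eigenvalues of $z$ and the irreducibility of $K$.
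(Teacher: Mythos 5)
Your proposal is correct, and in the main case it takes a genuinely different route from the paper. The opening coincides: both proofs use the eigenvalues of $z=x_6y_6$ to reduce $f$ to $\alpha\,(v_1^\ast\wedge v_2^\ast\wedge v_{-3}^\ast)+\beta\,(v_{-1}^\ast\wedge v_{-2}^\ast\wedge v_3^\ast)$ (your count of two zero-sum triples is right), and both kill the case $\alpha\beta=0$ via the radical and irreducibility. From there the paper computes: using the constrained shape \eqref{shapex6} of $x_6$, it splits into the case $A$ diagonal (excluded by trace arguments, e.g.\ $A=I$ forces $1=\tr{y}=0$) and the case $A$ non-diagonal, where after a diagonal normalization the identities $f(xv_i,v_j,v_k)=f(v_i,xv_j,xv_k)$ force $\langle v_1,v_2,v_{-3}\rangle$ to be $x$-invariant, contradicting irreducibility. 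You replace this entire entry-by-entry analysis with the intrinsic rank-$2$ locus, and your key claim checks out: since $f_1$ and $f_2$ each annihilate the complementary summand, $\iota_v f$ pairs $W_i$ only with $W_i$, so its Gram matrix is block-diagonal, ranks add, and each nonzero block has rank exactly $2$ (a nonzero alternating form on a $3$-space); hence $\{v:\rk(\iota_v f)=2\}=(W_1\cup W_2)\setminus\{0\}$ in any characteristic, including $2$. Two steps you leave implicit are standard and worth one line each: a linear subspace contained in $W_1\cup W_2$ lies in one of the two (if $u_1\in U\setminus W_2$ and $u_2\in U\setminus W_1$ then $u_1+u_2\notin W_1\cup W_2$), so each element of $K$ permutes $\{W_1,W_2\}$; and rather than invoking perfectness of the projective image you can simply note that $K=\langle y_6,z\rangle$ is generated by elements of odd orders $3$ and $7$, hence has no subgroup of index $2$, so $W_1$ is $K$-invariant. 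What your route buys is that it never uses the entries of $x_6$ or the normalization \eqref{shapex6} at all, and it in fact proves a stronger statement (any irreducible subgroup of $\GL_6(\F)$ containing such a $z$ and lacking index-$2$ subgroups fixes no nonzero alternating trilinear form); the paper's argument is more computational but entirely elementary and self-contained at the matrix level.
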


\begin{proof}
By the above discussion we may suppose that $K$ is generated by $x=x_6$, $y=y_6$ and $z=xy$ as in \eqref{shapex6}.
Proceeding by way of contradiction, let $f$ be a non-zero alternating trilinear form fixed  by $K$.
We have  $f(z^ {-1} v_i, z^{-1} v_j, z^ {-1} v_k)=
\eps^ {i+j+k}f(v_i,v_j,v_k)$. Since $f$ is fixed by $z$  we get $f(v_i,v_j,v_k)=0$ whenever 
$i+j+k\not\equiv 0\pmod 7$. Thus:
$$f=\lambda (v_1^\ast \wedge v_2^\ast \wedge v_{-3}^ \ast)+\mu (v_{-1}^\ast \wedge v_{-2}^\ast \wedge v_{3}^ \ast),$$
for some $\lambda,\mu\in \F$. Assume $\mu =0$. In this case $\langle v_{-1},v_{-2},v_{3}\rangle\leq \rad(f)$ and so
$\rad(f)$ is a non-trivial subspace of $\F^ 6$ fixed by $K$.
Since $K$ is irreducible, we obtain $\rad(f) =\F^6$ and so $f=0$, a contradiction. The same holds if $\lambda=0$. 
It follows that both $\lambda$ and $\mu$ are non-zero and, multiplying by $\lambda^{-1}$, we may assume:
\begin{equation}\label{trilK}
f=v_1^\ast \wedge v_2^\ast \wedge v_{-3}^ \ast+\rho\left( v_{-1}^\ast \wedge v_{-2}^\ast \wedge v_{3}^ \ast\right),\ \rho\ne 0
\end{equation}
i.e., $f\left(v_{1},v_{2},v_{-3}\right)=1$, $f\left(v_{-1},v_{-2},v_{3}\right)=\rho$, 
$f\left(v_{i},v_{j},v_{k}\right)=0$ otherwise.
Write 
$$x=(a_{i,j}),\quad i,j\in\{1,2,-3,-1,-2,3\}.$$

\noindent \textbf{Case 1.} Suppose that $A$ in \eqref{shapex6} is diagonal.
From $f(x v_1, v_2,v_{-3} )=f(v_1, x v_2,$ $x v_{-3})$ we get
$a_{1,1}=a_{2,2}\cdot a_{-3,-3}$. Similarly, from $f(v_1,  x v_2,v_{-3})=
f(x v_1, v_2, $ $x v_{-3})$ and 
$f(v_1,v_2, x v_{-3})=f( x v_1,  x v_{2}, v_{-3})$ we get, respectively, 
$a_{2,2}=a_{1,1}\cdot a_{-3,-3}$ and $a_{-3,-3}=a_{1,1}\cdot a_{2,2}$. It follows that either $a_{1,1}=a_{2,2}=a_{-3,-3}=0$ or $a_{1,1}=a_{2,2}=a_{-3,-3}=1$. 
In the first case  $A=0$ and it is clear from \eqref{shapex6} that $y=x(xy)$ cannot have order $3$. 
In the second case, $A=I$, whence the contradiction
$1=\tr{x(xy)}=\tr{y}=0$.
\medskip

\noindent\textbf{Case 2.} Some non-diagonal entry of $A$ is not zero. We may suppose that it is 
in the first column, up to the permutation $\left(v_{1}v_{i}\right)\left(v_{-1}v_{-i}\right)$ for some $i=2,-3$.
We claim that $a_{2,1} \ne 0$ if, and only if, $a_{-3,1}\ne 0$.
Indeed, suppose $a_{2,1} \ne 0$. The subspace $S_1=\{s\in \F^ 6\mid f(v_1, x v_1, s)=0\}$ is 
$x$-invariant and contains $\langle v_{-1},v_{-2},v_3,v_1\rangle$. 
If $a_{-3,1}=0$, then $v_{-3}\not\in S_1$. As $v_2 \in S_1$ the space
$S_1$ has dimension $5$ and is fixed by $K$, a contradiction. Thus $a_{-3,1}\ne 0$ and the same 
argument shows the opposite implication.  Conjugating $x$ and $z$ by 
$\diag\left(a_{2,1}^{-1}\cdot a_{-3,1}^{-1}, a_{2,1}, a_{-3,1}, \ a_{2,1}\cdot a_{-3,1}, a_{2,1}^{-1}, a_{-3,1}^{-1}\right)$
their shapes are preserved as well as the condition $f\left(v_1, v_2,v_{-3}\right)=1$.
Hence we may suppose
$a_{2,1}=a_{-3,1}=1$ (and $a_{-1,-2}=a_{-1,3}=1$ by \eqref{shapex6}). 

For $j\in \left\{-1, -2,3\right\}$ we obtain:
$$0 = f(x v_1,  v_1, v_j)=f(v_1, x v_1, x v_j)=a_{-3,j}+ a_{2,j},$$
whence $a_{-3,-1}=a_{2,-1}$,  $a_{-3,-2}=a_{2,-2}$ and  $a_{-3,3}=a_{2,3}$. 
After these substitutions, for $j\in \left\{1, 2,-3\right\}$ we get
$$
\rho\thinspace a_{-1,j}=f( x v_j, v_{-2}, v_3) = f( v_j, x v_{-2}, x v_3) =0,$$
whence $a_{-1,1}=a_{-1,2}=a_{-1,-3}=0$ (and $a_{-2,1}=a_{3,1}=0$).
Now $f(x v_3, v_{-2}, v_j)$ $=f(v_3,  xv_{-2},  x v_j) $ for $j=2,-3$
gives 
$\rho\thinspace a_{-2,j}=0$, i.e., $a_{-2,2}=a_{-2,-3}=a_{3,2}=0$. Finally  
$f( x v_{-2}, v_{3}, v_{-3})=f(v_{-2},  xv_{3},  x v_{-3})$ gives $a_{3,-3}=0$.

We conclude that $\langle v_1,v_2,v_{-3} \rangle$  is an $x$-invariant subspace,
hence a $K$-invariant subspace, a contradiction.
\hfill $\square$ \end{proof}

\begin{rem}\label{quasidet} 
In characteristic $2$, a perfect irreducible subgroup of $\SL_6(\F)$ having 
an involution $x$ with similarity invariants 
$t^2+1,\ t^ 2+1,\ t^ 2+1$, cannot be contained in $\SO_6(\F)^\prime= \Omega_6(\F)$. Indeed the dimension of the fixed points space 
of $x$ is $3$, whence the quasideterminant of $x$ is, by definition, $(-1)^3=-1$. On the contrary, the quasideterminant of an involution in $\Omega_6(\F)$ is $1$ (see \cite[p. xii]{Atlas}).
\end{rem}

\begin{lemma}\label{orto} 
If $H_6$ is  irreducible, $\langle v_0\rangle$
has no $H_7$-invariant complement.
\end{lemma}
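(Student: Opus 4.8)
The plan is to argue by contradiction, exploiting the quadratic form that singles out $\Omega_7(\F)$ inside $\GL_7(\F)$ together with the quasideterminant obstruction of Remark \ref{quasidet}. Recall from the discussion preceding \eqref{Omega7} that $H_7\le\Omega_7(\F)$ preserves the nondegenerate quadratic form $Q$, whose associated symmetric bilinear form $g$ has rank $6$ and radical $\langle v_0\rangle$, and that under the isomorphism $h\mapsto h_6$ the induced action of $H_7$ on $\F^7/\langle v_0\rangle$ is the natural action of $H_6=\langle x_6,y_6\rangle$. Suppose, for a contradiction, that $\langle v_0\rangle$ admits an $H_7$-invariant complement $W$, so that $\F^7=\langle v_0\rangle\oplus W$ as $H_7$-modules. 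The point is that such a $W$ carries more structure than the mere symplectic form already available from Lemma \ref{lem:symp}: it carries the restriction of $Q$.

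First I would check that $g|_W$ is nondegenerate. If $w\in W$ satisfies $g(w,w')=0$ for all $w'\in W$, then, since $g(w,v_0)=0$ as $v_0$ spans $\rad(g)$, the vector $w$ lies in $\rad(g)=\langle v_0\rangle$; as $W\cap\langle v_0\rangle=0$, this forces $w=0$. Because $W$ has even dimension $6$, in characteristic $2$ the nondegeneracy of the polar form $g|_W$ is equivalent to the nondegeneracy of the quadratic form $Q|_W$; thus $Q|_W$ is a nondegenerate quadratic form on $W$, and it is preserved by $H_7$ since $W$ is $H_7$-invariant. The composite $W\hookrightarrow\F^7\twoheadrightarrow\F^7/\langle v_0\rangle$ is then an $H_7$-isomorphism identifying the action of $H_7$ on $W$ with the natural action of $H_6$; hence $H_6$ preserves the nondegenerate quadratic form transported from $Q|_W$, that is, $H_6\le\O_6(\F)$.

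It remains to locate $H_6$ inside $\Omega_6(\F)$ and reach the contradiction. Since $H_6\le\O_6(\F)$ and $H_6$ is perfect—the orders of $x_6,y_6,x_6y_6$ being $2,3,7$—it is contained in the derived subgroup $\Omega_6(\F)=\SO_6(\F)'$, the quotient $\O_6(\F)/\Omega_6(\F)$ being abelian. But $H_6$ is an irreducible perfect subgroup of $\SL_6(\F)$ containing the involution $x_6$ with similarity invariants $t^2+1,\,t^2+1,\,t^2+1$, so Remark \ref{quasidet} forbids $H_6\le\Omega_6(\F)$, a contradiction. I expect the only delicate point to be the nondegeneracy of $Q|_W$ and the clean identification of $(W,Q|_W)$ with the natural module of $H_6$: this is precisely what upgrades the known inclusion $H_6\le\Sp_6(\F)$ to the forbidden $H_6\le\Omega_6(\F)$, after which the determinant and perfectness steps are routine.
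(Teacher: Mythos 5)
Your proof is correct and takes essentially the same approach as the paper: restrict the $H_7$-invariant quadratic form $Q$ to the putative complement $W$, identify $W$ with the natural $H_6$-module $\F^7/\langle v_0\rangle$, use perfectness of $H_6$ to force $H_6\le \Omega_6(\F)$, and contradict Remark \ref{quasidet}. The only (harmless) difference is that you verify nondegeneracy of $g|_W$ intrinsically from $\rad(g)=\langle v_0\rangle$ and $W\cap\langle v_0\rangle=0$, whereas the paper first pins down $W$ as the span of the eigenvectors of $\tilde x_7\tilde y_7$ with eigenvalue $\neq 1$ (equivalently $a_x=0$ in \eqref{shape7}) and reads off the nondegenerate Gram matrix $J_6$ explicitly.
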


\begin{proof}
Any complement should coincide with the space $W=\left\langle v_1, \dots , v_{3}\right\rangle$
generated by the eigenvectors of $\tilde x_7\tilde y_7$ relative to the eigenvalues $\ne 1$. 
This happens only if $a_{x}^T =0$ in \eqref{shape7}. In this case $H_6$ preserves 
the restriction $\bar Q=Q_{|W}$, where $Q$ is the quadratic form fixed by $H_7$.
The symmetric form associated to $\bar Q$ has Gram matrix $J_6$, as above,
which is non-degenerate.
Hence $\bar Q$ is a quadratic form.
This implies that $H_6$ is contained in $\Omega_6(\F)$, in contrast with Remark \ref{quasidet}.

\hfill $\square$
\end{proof}

\begin{lemma}\label{invSub}
If $H_6$ is irreducible, then $\langle v_0\rangle$ is the only proper $H_7$-invariant subspace
of $\F^7$. Similarly $W=\langle v_{1}, v_{2}, v_{-3}, v_{-1}, v_{-2}, v_3\rangle$ is the
only proper
$H_7^T$-invariant subspace
of $\F^7$.
\end{lemma}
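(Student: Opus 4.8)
The plan is to analyze the structure of $\tilde x_7, \tilde y_7$ given by \eqref{shape7} to pin down exactly which subspaces are invariant. The matrix shape \eqref{shape7} shows that $\langle v_0\rangle$ is $H_7$-invariant, since every generator is upper block-triangular with $(1,1)$-entry equal to $1$ and first column $(1,0,\dots,0)^T$. For the dual picture, $W=\langle v_1,\dots,v_{-3}\rangle$ is the span of the last six coordinates, and since the transpose generators $\tilde x_7^T,\tilde y_7^T$ are lower block-triangular, $W$ is $H_7^T$-invariant. So the content is the word \emph{only}.

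First I would show $\langle v_0\rangle$ is the unique proper $H_7$-invariant subspace. Let $U$ be a proper nonzero $H_7$-invariant subspace and consider its image $\bar U$ under the projection $\F^7\to \F^7/\langle v_0\rangle\cong W$. Because $H_7$ acts on this quotient as $H_6=\langle x_6,y_6\rangle$ (read off the lower-right blocks in \eqref{shape7}), and $H_6$ is irreducible by hypothesis, $\bar U$ is either $0$ or all of $W$. If $\bar U=0$ then $U\subseteq\langle v_0\rangle$, forcing $U=\langle v_0\rangle$. If $\bar U=W$ then $U+\langle v_0\rangle=\F^7$, so either $U=\F^7$ (excluded, as $U$ is proper) or $U$ has dimension $6$ and $U\cap\langle v_0\rangle=0$, i.e. $U$ is an $H_7$-invariant complement to $\langle v_0\rangle$. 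But Lemma \ref{orto} rules out exactly this, giving a contradiction. Hence $U=\langle v_0\rangle$ is the only possibility.

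For the dual statement I would argue symmetrically, applying the transpose action. The quotient $\F^7/W$ is one-dimensional, and $H_7^T$ acts on $W$ itself through the lower-right $6\times 6$ blocks of the $\tilde x_7^T,\tilde y_7^T$, which are $x_6^T,y_6^T$; thus $H_7^T$ acts on $W$ as $H_6^T$. Since $H_6$ is irreducible, so is $H_6^T$ (irreducibility is preserved under transpose-inverse, equivalently under passing to the dual module, and here the invariant-subspace lattice of $H_6^T$ is the lattice of annihilators of $H_6$-invariant subspaces). Given an arbitrary proper nonzero $H_7^T$-invariant subspace $U'$, intersecting with $W$ and projecting to $\F^7/W$ reduces to the same dichotomy: $U'\cap W$ is $0$ or $W$ by irreducibility of $H_6^T$ on $W$, and the case $U'\cap W=0$ (so $U'$ one-dimensional, a complement to $W$) is excluded by the transpose analogue of Lemma \ref{orto}. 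This forces $W$ to be the unique such subspace.

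The main obstacle is the reduction of the complement case to Lemma \ref{orto}: one must verify that an $H_7$-invariant line transverse to $\langle v_0\rangle$, or dually an $H_7^T$-invariant hyperplane, is genuinely the same configuration that Lemma \ref{orto} forbids. For $H_7$ this is direct, since an invariant $6$-dimensional $U$ with $U\cap\langle v_0\rangle=0$ is precisely a complement to $\langle v_0\rangle$. For the dual side the care needed is in translating ``$H_7^T$ has no invariant complement to $W$'' back into Lemma \ref{orto}: a complement to $W$ for $H_7^T$ corresponds, via the duality pairing between $\F^7$ and its dual that exchanges $H_7$ and $H_7^T$, to a complement of $\langle v_0\rangle$ for $H_7$, so Lemma \ref{orto} applies after this identification. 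I expect the rest to be routine block-matrix bookkeeping.
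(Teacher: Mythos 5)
Your proof is correct, and it shares the paper's skeleton --- the irreducibility of $H_6$ (resp.\ $H_6^T$) forces a dichotomy, and the complement case is excluded via Lemma \ref{orto} --- but it deviates in two places, both to your credit. For the $H_7$-half, you run the dichotomy on the image $\bar U$ in the quotient $\F^7/\langle v_0\rangle$, while the paper splits on $U\cap W$, asserting ``$U\cap W=0$ or $U\cap W=U$'' by irreducibility of $H_6$; since $W$ itself is \emph{not} $H_7$-invariant when $a_x\neq 0$ (only the quotient carries the $H_6$-action), your formulation is the clean reading of that step. You also bypass the paper's use of perfectness of $H_7$ in the one-dimensional case: where the paper argues that $H_7$ induces the identity on a line and identifies $U$ with the $1$-eigenspace of $\tilde x_7\tilde y_7$, you get $U\subseteq\langle v_0\rangle$ immediately from $\bar U=0$. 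For the transpose half, the paper in effect reruns the orthogonality argument on the transposed side: a one-dimensional $H_7^T$-invariant subspace transverse to $W$ must be the $1$-eigenspace of $(\tilde x_7\tilde y_7)^T$, forcing $a_x=0$ and making $H_6^T$ orthogonal, against Remark \ref{quasidet}; you instead dualize --- an $H_7^T$-invariant decomposition $L\oplus W$ pairs off, under the standard pairing exchanging $H_7$ and $H_7^T$, with an $H_7$-invariant decomposition $\langle v_0\rangle\oplus L^{\perp}$ (with $v_0\notin L^{\perp}$ precisely because $L\not\subseteq W=v_0^{\perp}$) --- and then quote Lemma \ref{orto} verbatim. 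The paper's route is a self-contained symmetric rerun using perfectness and a second appeal to Remark \ref{quasidet}; yours reuses the already-proved lemma at the cost of some annihilator bookkeeping, which you carry out correctly.
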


\begin{proof}
Let $U$ be a proper $H_7$-invariant subspace.
By the irreducibility of $H_6$ we have either $U\cap W=0$
or  $U\cap W=U$. In the first case $U$ has
dimension $1$.
Hence the perfect group $H_7$ induces the identity on $U$. This gives  $U=\langle
v_0\rangle$,
the eigenspace of $\tilde x_7\tilde y_7$ relative to $1$. The second case cannot arise, since $U$ would be an
$H_7$-invariant complement of $\langle v_0\rangle$, in contrast with the previous Lemma.
The second part of the statement follows noting that $\langle v_0^T\rangle$ is the
eigenspace 
of $(\tilde x_7\tilde y_7)^T$ relative to $1$ and using the fact that $H_6^T$ is not orthogonal by Remark 
\ref{quasidet}.
\hfill $\square$ \end{proof}

We are now ready to prove the following result on which Theorem \ref{main} is based.

\begin{propo}\label{Scott}
If $H_6$ is  irreducible, then
the group $H_7$  fixes a non-zero alternating trilinear form on $V$. 
\end{propo}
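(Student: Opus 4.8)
The plan is to run the same Scott-formula argument used for Theorem \ref{p7}, but with two essential modifications forced by characteristic $2$: the fixed-space dimensions of the generators on $\wedge^3(V^\ast)$ must be recomputed (the element $\tilde x_7$ is now unipotent), and, since $H_7$ is no longer absolutely irreducible (it fixes $\langle v_0\rangle$ by Lemma \ref{invSub}), the Aschbacher bound $d_{\wedge^3(V^\ast)}^{H_7}\le 1$ used for Theorem \ref{p7} is unavailable and must be replaced. First I would apply \eqref{Scott.gen} to the representation $\Phi$ on $\wedge^3(V^\ast)$, so that the goal $d_{\wedge^3(V^\ast)}^{H_7}\ge 1$ follows once I establish both $d_{\wedge^3(V^\ast)}^{\tilde x_7}+d_{\wedge^3(V^\ast)}^{\tilde y_7}+d_{\wedge^3(V^\ast)}^{\tilde x_7\tilde y_7}\ge 37$ and $\hat d_{\wedge^3(V^\ast)}^{H_7}\le 1$.

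For the dimension count I would treat the three generators separately. Both $\tilde x_7\tilde y_7$ (order $7$) and $\tilde y_7$ (order $3$) stay semisimple in characteristic $2$, so their fixed spaces on $\wedge^3(V^\ast)$ are counted by triples of eigenvalues with product $1$: for $\tilde x_7\tilde y_7=\diag(\eps^0,\eps^{-1},\eps^{-2},\eps^3,\eps^1,\eps^2,\eps^{-3})$ this is the number of $3$-subsets of $\Z/7\Z$ summing to $0$, namely $5$, and for $\tilde y_7$, which has eigenvalue $1$ of multiplicity $3$ and $\omega,\omega^2$ each of multiplicity $2$, a short count gives $13$; both match the odd-characteristic case. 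The delicate term is $\tilde x_7$: being an involution it is now unipotent of Jordan type $[2,2,2,1]$, and I would compute the Jordan structure of $\wedge^3$ on $M_2^{\oplus 3}\oplus M_1$ (writing $M_k$ for a single Jordan block of size $k$) via $\wedge^3\left(\bigoplus_a V_a\right)=\bigoplus_{\sum n_a=3}\bigotimes_a\wedge^{n_a}V_a$. Here the genuinely characteristic-$2$ phenomenon enters: a direct check gives $M_2\otimes M_2\cong M_2\oplus M_2$ (type $[2,2]$, since $(u-1)^2=0$ on the tensor square), rather than the $[3,1]$ of characteristic $0$, whence $M_2^{\otimes 3}\cong M_2^{\oplus 4}$. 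Assembling the pieces yields the decomposition $16\,M_2\oplus 3\,M_1$ under $\tilde x_7$, so the number of Jordan blocks — equivalently the fixed-space dimension — is $19$. Thus the sum is $19+13+5=37$, and \eqref{Scott.gen} gives $d_{\wedge^3(V^\ast)}^{H_7}+\hat d_{\wedge^3(V^\ast)}^{H_7}\ge 2$.

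The conceptual core is the bound $\hat d_{\wedge^3(V^\ast)}^{H_7}\le 1$, which is where Proposition \ref{notril6} enters. Using $\Phi(h^T)=\Phi(h)^T$ I would rewrite $\hat d_{\wedge^3(V^\ast)}^{H_7}=d_{\wedge^3(V^\ast)}^{H_7^T}$, the dimension of the space of $H_7^T$-invariant alternating trilinear forms. By Lemma \ref{invSub} the group $H_7^T$ fixes $W=\langle v_1,v_2,v_{-3},v_{-1},v_{-2},v_3\rangle$, on which, by \eqref{shape7}, it acts as $H_6^T=J_6H_6J_6^{-1}$, a conjugate of $H_6$; since the property of fixing no non-zero alternating trilinear form is conjugation-invariant, Proposition \ref{notril6} applies to $H_6^T$ on $W$. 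Hence any $H_7^T$-invariant $f$ restricts to $0$ on $W$, forcing $f=v_0^\ast\wedge g$ with $g\in\wedge^2(W^\ast)$; evaluating the invariance of $f$ on triples $(v_0,w_1,w_2)$ (the extra term coming from $\tilde x_7^Tv_0=v_0+a_x$ dies because $f|_W=0$) shows precisely that $g$ must be an $H_6^T$-invariant alternating bilinear form on $W$. Since $H_6^T$ is absolutely irreducible, Schur's lemma bounds the space of invariant bilinear forms by $1$, so $d_{\wedge^3(V^\ast)}^{H_7^T}\le 1$.

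Combining the two inputs gives $d_{\wedge^3(V^\ast)}^{H_7}\ge 2-1=1$, so $H_7$ fixes a non-zero alternating trilinear form, as claimed. I expect the main obstacle to be the characteristic-$2$ plethysm for $\tilde x_7$ — in particular getting $M_2\otimes M_2\cong M_2\oplus M_2$ right, since this is exactly the point where a naive transfer from the odd-characteristic computation would fail — together with checking carefully that the restriction-to-$W$ argument leaves no $H_7^T$-invariant trilinear form beyond the single family $v_0^\ast\wedge J_6$.
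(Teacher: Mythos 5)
Your proof is correct, and while its skeleton coincides with the paper's (Scott's formula on $\wedge^3(V^\ast)$, the identity $\Phi(h^T)=\Phi(h)^T$ to rewrite $\hat d$ as $d^{H_7^T}$, and Proposition \ref{notril6} applied to $H_6^T$ to kill the restriction $\tilde f|_W$), you finish the key bound $d_{\Lambda^3(V^\ast)}^{H_7^T}\le 1$ by a genuinely different route. The paper, after showing $\tilde f|_W=0$, proves that the contraction map $\Psi(v)=\tilde f(v,\ast,\ast)$ is injective (invoking Lemma \ref{invSub} a second time for the dichotomy $\ker\Psi\in\{0,W\}$), then works in the eigenbasis of $z=\tilde x_7\tilde y_7=z^T$ to show every nonzero invariant form is, after rescaling the $v_i$, exactly $\sum_{i=1}^3 v_0^\ast\wedge v_i^\ast\wedge v_{-i}^\ast$. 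You instead observe that $\tilde f|_W=0$ forces $f=v_0^\ast\wedge g$ with $g\in\wedge^2(W^\ast)$, that invariance of $f$ (the $a_h$-term dying because $f|_W=0$) makes $g$ an $H_6^T$-invariant alternating bilinear form, and that Schur's lemma on the absolutely irreducible $H_6^T$ bounds such $g$ by a one-dimensional space; this is shorter and more conceptual, avoids both the injectivity of $\Psi$ and the explicit eigenvalue bookkeeping, and uses Lemma \ref{invSub} only in the trivial form that $W$ is $H_7^T$-invariant (which is already visible from \eqref{shape7}). You also buy something the paper leaves implicit: the paper opens with ``proceeding as in the proof of Theorem \ref{p7}'', silently reusing the counts $19+13+5=37$ in characteristic $2$, whereas you actually verify them, correctly handling the plethysm for the now-unipotent involution via $M_2\otimes M_2\cong M_2\oplus M_2$ and the decomposition $\wedge^3\bigl(M_2^{\oplus 3}\oplus M_1\bigr)\cong 16\,M_2\oplus 3\,M_1$ (so $19$ Jordan blocks, matching the odd-characteristic value). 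The one thing the paper's longer computation yields that yours does not is the explicit shape of the invariant form, but that is not needed for the statement, and your Schur argument delivers the bound $\le 1$ just as rigorously.
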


\begin{proof}
Proceeding as in the proof of  Theorem \ref{p7}, we take the action of $H_7$ on the space
of trilinear forms, that we identify with $\Lambda^3(V^*)$, obtaining  
$$d_{\Lambda^ 3(V^\ast)}^ {H_7}+\hat d_{\Lambda^ 3(V^\ast)}^{H_7}=d_{\Lambda^ 3(V^\ast)}^
{H_7}+d_{\Lambda^ 3(V^\ast)}^{H_7^T} \geq 2.$$
It suffices to show that $d_{\Lambda^ 3(V^\ast)}^{H_7^T}\le 1$.
So let $\tilde f$ be a non-zero alternating trilinear form on $V$
fixed by $H_7^ T$. 
Denote by $W$ the $6$-dimensional subspace of $V$ fixed by $H_7^ T$.
The restriction $\tilde f|_W$ of $\tilde f$ to $W$ is $ H_6^ T$-invariant and so it must
be the zero form, by Proposition \ref{notril6}. 
Furthermore, the kernel of the function $\Psi: V \rightarrow \Lambda^ 2 (V^ \ast)$,
defined
as $\Psi(v)=\tilde f(v,\ast,\ast)$, is a $H_7^ T$-invariant subspace of $V$. 
By Lemma \ref{invSub} either $\ker(\Psi)=W$, or $\ker(\Psi)=0$. 
If $\ker(\Psi)=W$, then $\tilde f$ is the
zero form. It follows that $\Psi$ is
injective.
Set $z=\tilde x_7\tilde y_7$ and take 
$\mathcal{B}$ as above.
We have $z=z^ T$ and
$$\tilde f( z^ {-1}v_i,z^ {-1} v_j,z^ {-1}v_k)=\eps^{i+j+k}\tilde f(v_i,v_j,v_k)=\tilde
f(v_i,v_j,v_k),$$
whence either $i+j+k=0$ or $\tilde f(v_i,v_j,v_k)=0$. 
It follows $f(v_i,v_j,v_k)=0$ except, possibly, for 
$(i,j,k)\in \{(0,\ell,-\ell), (1,2,-3),(-1,-2,3) \mid \ell=1,2,3\}$.
By the assumption that $\tilde f|_W$ is the zero-form, 
we have $\tilde f(v_{\pm 1},v_{\pm 2},v_{\mp 3})=0$.
If $\tilde f(v_0,v_i,v_{-i})=0$, then $\tilde f(v_i,\ast,\ast)$ is the zero form, in contrast 
with the injectivity of $\Psi$. So, $\tilde f(v_0,v_i,v_{-i})\neq 0$ for all $i$'s. 
Substituting each $v_i$ with a scalar multiple, if necessary, we get
$\tilde f=\sum_{i=1}^3\left( v_0^\ast\wedge
v_i^\ast\wedge v_{-i}^\ast\right)$. We conclude that  $d_{\Lambda^3(V^\ast)}^{H_7^T}=
1$.
\hfill $\square$ \end{proof}

\medskip

\begin{proof}[Proof of Theorem \ref{main}]
Suppose that $\char \F=2$ and let $H_6$ 
be an irreducible subgroup of $\SL_6(\F)$ generated by a non-rigid 
$(2,3,7)$-triple $(x_6,y_6,x_6y_6)$. Its similarity invariants are
those in \eqref{inv} by \cite{TV2}. Setting $K=H_6^ T$ in Proposition \ref{notril6}, 
we obtain that the transpose $H_6^ T$ of $H_6$ does not fix non-zero alternating trilinear forms. 
Hence, Proposition \ref{Scott} implies that $H_7=\langle \tilde x_7,\tilde y_7\rangle$, in the notation \eqref{shape7}, fixes a non-zero alternating trilinear form $f$  on $V=\F^ 7$.
We want to show that $f$ is similar to the Dickson form, applying \cite[Theorem 5(5)]{A}.
Its hypothesis require that $H_7$ acts indecomposably on $V=\F^ 7$, which is true
by Lemma \ref{orto}, and that $\rad(B_{v_0})=\langle v_0\rangle$, where
$B_{v_0}$ is the symmetric form defined as $B_{v_0}(u,w)=f(v_0,u,w)$. It remains to show this fact.
Now, the radical of $B_{v_0}$ is an $H_7$-invariant subspace of $V$. 
Thus, by Lemma \ref{invSub}, either $\rad(B_{v_0})=\langle v_0\rangle$ or $\rad(B_{v_0})=V$. 
Suppose the latter case holds, i.e.  $B_{v_0}$ is the zero form.

Take $\bar u,\bar v,\bar w\in \overline{V}=V/\langle v_0\rangle$ and define
$$\ovT(\bar u,\bar v,\bar w)=f(u, v, w),$$
where
$u=\eta v_0+\overline{u}$, $v=\lambda v_0+\overline{v}$ and $w=\mu v_0+\overline{w}$.
The function $\ovT$ is well-defined as
$$
f(u,v,w)  =f(\eta v_0+\overline{u},\lambda v_0+\overline{v},\mu v_0+\overline{w} )=
f(\overline{u},\overline{v},\overline{w}).$$
For every element of $h\in H_7$,
$\ovT(\overline{u},\overline{v},\overline{w})= f(u,v,w)=
f(h_6^ {-1}\overline{u},h_6^ {-1}\overline{v},h_6^{-1}\overline{w})$
 and so
$H_6$ fixes a non-zero alternating trilinear form $\ovT$ on $\overline{V}$, in contrast with Proposition \ref{notril6} 
(this time taking $K=H_6$).
We conclude that $H_7$
fixes a Dickson form by \cite[Theorem 5(5)]{A}, hence it is a subgroup of $G_2(\F)$.
\hfill $\square$
\end{proof}

\section{Hurwitz generators for $G_2(q)$}

In this section we set $q=p^a$, where $p$ is a prime, and consider $\F$ as the algebraic closure of the finite field $\F_q$. Our aim is to find explicit Hurwitz generators for the groups $G_2(q)$ when they exist, namely for $q\geq 5$ \cite{Ma}.

We first consider the case where $q$ is even. So let $p=2$ and $q\geq 8$. Our generators $x_6$, $y_6$ and their product $x_6y_6$ have similarity invariants \eqref{inv}, 
hence are conjugate to the matrices in \eqref{shapex6}, even if they have different shapes.  Indeed they are obtained from family (IIa) in \cite{V3} for special values of the parameters.
The choice of this family was made for  uniformity with the Hurwitz  
generators of $\PSp_6(q)$, $5\le q$ odd, 
used in  \cite{TV}. Indeed, it turns out that matrices of the same shape generate $\PSp_6(q)$ for $q$ odd and its subgroup $G_2(q)$ for $q$ even.  

For each  $r\in \F_q\setminus \F_4$ we 
set 
\begin{equation}\label{abc}
a=\frac{r + 1}{d},\quad
b=\frac{r^3 + r^2+ 1}{d},\quad c=\frac{r^3 + 1}{d},\quad d=r^2+r+1,
\end{equation}
\begin{equation}\label{x,y}
 x_6= \begin{pmatrix}
  0 & 0 & 1 & 0 & r & 1 \\
  0 & 0 & 0 & 1 & 0 & a \\
  1 & 0 & 0 & 0 & 1 & r \\
  0 & 1 & 0 & 0 & a & 0 \\
  0 & 0 & 0 & 0 & 0 & 1   \\
  0 & 0 & 0 & 0 & 1 & 0
    \end{pmatrix},
 \qquad
   y_6=\begin{pmatrix}
  1 & 0 & 0 & 0 & 1 & c \\
  0 & 1 & 0 & 0 & b & 1 \\
  0 & 0 & 0 & 0 & 1 & 0 \\
  0 & 0 & 0 & 0 & 0 & 1 \\
  0 & 0 & 1 & 0 & 1 & 0 \\
  0 & 0 & 0 & 1 & 0 & 1
   \end{pmatrix}
\end{equation}
and  define $H=\langle x_6,y_6\rangle$.

\begin{lemma}\label{lemma: irreducibility} 
The group $H$ is absolutely irreducible, except when:
$$r^{12} + r^9 + r^5 +  r^2 +1=0.$$
In particular, if $H$ is absolutely irreducible, then it is a subgroup of $G_2(q)$.
\end{lemma}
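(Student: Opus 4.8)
The statement has two parts: the irreducibility criterion and the concluding ``in particular'' clause, and the second part is immediate from the machinery already in place. The matrices $x_6,y_6$ of \eqref{x,y} have all their entries in $\F_q$, and, as recorded above, $(x_6,y_6,x_6y_6)$ is a non-rigid $(2,3,7)$-triple with similarity invariants \eqref{inv}; in particular $H\le \SL_6(\F)$. Hence, once $H$ is known to be absolutely irreducible, $H$ is in particular an irreducible subgroup of $\SL_6(\F)$ generated by such a triple, so Theorem \ref{main} applies and yields $H\le G_2(\F)$. Since every entry of $x_6$ and $y_6$ lies in $\F_q$, we also have $H\le \GL_6(\F_q)$, whence $H\le G_2(\F)\cap\GL_6(\F_q)=G_2(q)$, the group of $\F_q$-rational points of $G_2$ in this representation. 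This settles the final sentence, and it remains only to establish the irreducibility criterion.

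For the criterion I would argue via Burnside's theorem. Over the algebraically closed field $\F$, absolute irreducibility of $H=\langle x_6,y_6\rangle$ is equivalent to irreducibility, and the latter holds if and only if the products of $x_6$ and $y_6$ span the whole matrix algebra $\Mat_6(\F)\cong\F^{36}$. The plan is therefore to select $36$ explicit words $w_1,\dots,w_{36}$ in $x_6$ and $y_6$ (short monomials suffice), to flatten each $w_i$ into a vector of $\F^{36}$ by listing its $36$ entries, and to collect these vectors as the rows of a $36\times36$ matrix $M=M(r)$ whose entries are rational functions of $r$ with denominators a power of $d=r^2+r+1$. Then $H$ is absolutely irreducible precisely when $\det M(r)\ne 0$.

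The computational heart of the proof is the evaluation and factorisation of $\det M(r)$. After clearing the denominators $d$, one checks by a direct machine-assisted calculation that the numerator of $\det M(r)$ is, up to a nonzero scalar and up to factors that vanish only on the excluded set $\F_4$ (namely powers of $r$ and of $d=r^2+r+1$: note $0,1\in\F_4$ and the roots of $d$ lie in $\F_4\setminus\F_2$, while $r^{12}+r^9+r^5+r^2+1$ and $d$ are coprime), equal to $r^{12}+r^9+r^5+r^2+1$. Because the hypothesis $r\in\F_q\setminus\F_4$ forces both $r\ne 0$ and $d\ne 0$, those spurious factors are harmless, and we conclude that $\det M(r)=0$ — equivalently that $H$ fails to be absolutely irreducible — if and only if $r^{12}+r^9+r^5+r^2+1=0$.

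The main obstacle is exactly this determinant computation together with its factorisation. One must choose the $36$ words so that $M(r)$ is invertible for generic $r$, ensuring $\det M$ is not identically zero; and one must then isolate, among the factors of the numerator, the unique one, namely $r^{12}+r^9+r^5+r^2+1$, that can genuinely vanish for admissible $r$, while correctly discarding the contributions of $r$ and of $d$ that vanish only inside the forbidden field $\F_4$. Once this factorisation is carried out, the remaining argument is routine bookkeeping.
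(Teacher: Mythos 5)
Your treatment of the ``in particular'' clause matches the paper: the paper's entire proof of that clause is ``apply Theorem \ref{main}'', and it is just as brief as you are about descending from $H\le G_2(\F)$ to the finite group $G_2(q)$, so no complaint there. For the irreducibility criterion, however, the paper does no fresh computation at all: it simply invokes \cite[Lemma 2.1]{TV}, where the absolute irreducibility of matrices of exactly this shape was already analysed (the generators \eqref{x,y} were deliberately chosen to match the $\PSp_6(q)$ generators of \cite{TV}), specialised to $8\le q$ even and $r\in\F_q\setminus\F_4$. Your Burnside-style plan is therefore a genuinely different, self-contained route --- but as written it has a real gap.

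The gap is in the claimed equivalence ``$\det M(r)=0$ --- equivalently $H$ fails to be absolutely irreducible''. Burnside gives you one direction only: if $\det M(r)\ne 0$ then your $36$ chosen words span $\Mat_6(\F)$ and $H$ is absolutely irreducible. But the lemma's ``except when'' asserts that $H$ is actually \emph{reducible} at every root of $r^{12}+r^9+r^5+r^2+1$, and from $\det M(r)=0$ you may conclude only that your particular $36$ words fail to span --- not that no words do. For a fixed finite list of words the determinant can vanish accidentally at parameter values where the enveloping algebra is nevertheless all of $\Mat_6(\F)$; nothing in your setup rules this out. To close the ``only if'' direction you would need a separate argument at the roots of that polynomial, e.g.\ exhibiting an explicit invariant subspace there or showing directly that the algebra generated is proper (which is in effect what the analysis cited in \cite[Lemma 2.1]{TV} supplies). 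A secondary weakness: the computational heart --- the choice of the $36$ words and the factorisation of $\det M(r)$ into $r^{12}+r^9+r^5+r^2+1$ times powers of $r$ and $d$ --- is only asserted (``one checks by a machine-assisted calculation''), not exhibited, so even the ``if'' direction currently rests on an unperformed computation. Note that the downstream application (Theorem \ref{G2Even}) only uses the ``if'' direction, so the damage is confined to the lemma as an if-and-only-if statement; but that is the statement you were asked to prove.
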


\begin{proof}
For the absolute irreducibility of $H$ apply \cite[Lemma 2.1]{TV} to the case  
$8 \le q$ even, $r\in \F_q\setminus \F_4$. For the second claim of the statement, apply Theorem \ref{main}. 
\hfill $\square$ \end{proof}

Now we want to exclude that $H$ is contained in a maximal subgroup of $G_2(q)$. 
By \cite{C}, the absolutely irreducible maximal subgroups $M$ of $G_2(q)$ are of type:
$$M\cong \SL_3(q).2,\quad M\cong \SU_3(q^ 2).2,\quad M\cong G_2(q_0)\;\;(\F_{q_0}<\F_q).$$

\begin{lemma}\label{PSU(3,q)} 
If $H$ is absolutely irreducible, it is not contained in $\SL_3(q)$ or $\SU_3(q^ 2)$.
\end{lemma}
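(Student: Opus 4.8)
The plan is to play the \emph{absolute irreducibility} of $H$ against the \emph{reducibility}, on the natural $6$-dimensional module $\F^6$, of the index-$2$ subgroups $\SL_3(q)$ and $\SU_3(q^2)$ sitting inside the maximal subgroups $\SL_3(q).2$ and $\SU_3(q^2).2$ of $G_2(q)$. Concretely, I would show that each of $\SL_3(q)$ and $\SU_3(q^2)$ stabilizes a $3$-dimensional subspace of $\F^6$. Since $H$ is absolutely irreducible by hypothesis, it stabilizes no proper nonzero subspace, and hence it cannot be contained in either of these two reducible subgroups. Note that the element of order $7$ coming from $x_6y_6$ does \emph{not} by itself obstruct membership in $\SL_3(q)$ or $\SU_3(q^2)$ (its six eigenvalues split as $\{\eps,\eps^2,\eps^{-3}\}$ and their inverses, compatibly with a $3\oplus 3$ action), so the argument must be driven by irreducibility rather than by element orders.

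To justify the reducibility I would recall the structure of the embedding. The subgroup $\SL_3(q)<G_2(q)$ arises from the long-root $A_2$-subsystem, and on the $7$-dimensional module its restriction decomposes as the natural module $\oplus$ its dual $\oplus$ a trivial summand; in characteristic $2$ the trivial constituent is precisely the radical $\langle v_0\rangle$ of the $G_2$-invariant symmetric form discussed in Section~\ref{prelim}. Passing to the $6$-dimensional quotient $\F^6=\F^7/\langle v_0\rangle$, the restriction to $\SL_3(q)$ becomes a direct sum of two inequivalent $3$-dimensional submodules. The same subsystem analysis applied to $\SU_3(q^2)$ yields a restriction that is again a sum of two $3$-dimensional submodules (the natural module and its conjugate). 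In either case the subgroup acts reducibly on $\F^6$, and I would extract from this the required $3$-dimensional invariant subspace, invoking the description of the maximal subgroups in \cite{C}.

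Combining the two steps gives $H\not\le \SL_3(q)$ and $H\not\le \SU_3(q^2)$. I would also flag the complementary point that the \emph{full} maximal subgroups $\SL_3(q).2$ and $\SU_3(q^2).2$ do act irreducibly on $\F^6$, because the outer involution swaps the two $3$-dimensional summands; thus these cannot be excluded by the present reducibility argument and will have to be handled separately (by order or trace considerations in the subsequent lemmas). The main obstacle is pinning down the precise restriction of the $G_2$-module to these subgroups in characteristic $2$ — in particular confirming that the two $3$-dimensional constituents are genuinely distinct submodules of $\F^6$ (so that the index-$2$ subgroups really are reducible) rather than being fused into an irreducible $6$-dimensional module — for which I would lean on the $A_2$-subsystem structure together with the identification of $\langle v_0\rangle$ as the radical established earlier.
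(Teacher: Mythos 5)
Your proposal is correct in outline, but it takes a genuinely different route from the paper's. You exclude the containments structurally: the derived groups $\SL_3(q)$ and $\SU_3(q^2)$ of the two maximal subgroups are long-root $A_2$-subsystem subgroups, the $7$-dimensional $G_2$-module branches to them as natural $\oplus$ dual $\oplus$ trivial, in characteristic $2$ the trivial constituent is the radical $\langle v_0\rangle$, and hence on $\F^6$ each acts as a sum of two $3$-dimensional submodules over the algebraic closure --- incompatible with containing the absolutely irreducible $H$. The fusion worry you flag as the main obstacle is easily settled: the centre of the algebraic $A_2$-subgroup has order $3$, which is invertible in characteristic $2$, and it acts on the two constituents by the two nontrivial cube roots of unity, so the decomposition is a direct sum for the algebraic subgroup and remains one upon restriction to the finite subgroups $\SL_3(q)$ and $\SU_3(q^2)$; for the unitary group the two summands are defined only over $\F_{q^2}$, which is precisely why the hypothesis must be \emph{absolute} irreducibility, as you use it. The paper argues instead without any branching data: if $H$ were contained in $\SL_3(q)$ or $\SU_3(q^2)$, the ambient $6$-dimensional action would have to come from the action of $\SL_3(\F)$ on the symmetric square of the natural $3$-dimensional module (by \cite[5.4.11]{KL}), and in that action an involution has similarity invariants $t+1$, $t+1$, $t^2+1$, $t^2+1$ (fixed-point space of dimension $4$), whereas $x_6$ has $t^2+1$, $t^2+1$, $t^2+1$ (fixed-point space of dimension $3$), a contradiction. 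So the paper trades your branching fact for a cited classification of $6$-dimensional modules plus a Jordan-form comparison, in line with its systematic use of similarity invariants; your version, once the $A_2$-branching is pinned down (by the weight computation you sketch, or from \cite{C}), is more self-contained on the module-theoretic side, makes no use of the involution data, and shows more: no absolutely irreducible subgroup whatsoever lies in these index-$2$ subgroups.

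One small correction: the full maximal subgroups $\SL_3(q).2$ and $\SU_3(q^2).2$, which as you rightly note act irreducibly because the outer involution swaps the two summands, are not later excluded ``by order or trace considerations''; in the proof of Theorem \ref{G2Even} the paper simply uses that a Hurwitz group is perfect, so $H\le M$ forces $H\le M'$, and then the present lemma applies. Thus stating the lemma for the index-$2$ subgroups, as you do, is exactly what is consumed there.
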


\begin{proof}
If our claim is false,  $H$ arises from an action of $\SL_3(\F)$ 
on the symmetric square $S(\F^6)$ (see \cite[5.4.11]{KL}). But, in this action, an involution has similarity invariants $t+1$, $t+1$, $t^2+1$, $t^2+1$,
different from those of $x_6$.
\hfill $\square$ \end{proof}

\begin{rem}\label{F1} 
The \emph{field of definition} of a subgroup $H$ of $\GL_n(q)$  is the smallest subfield $\F_{q_1}$
of $\F_q$ such that a conjugate of $H$, under $\GL_n(\F)$, is contained in $\GL_n(q_1)(\F^\ast I)$, where $\F^\ast I$ denotes the center of $\GL_n(\F)$. This is to ensure that no conjugate of the projective image of $H$ lies in $\PGL_n(q_0)$ for some $q_0<q_1$. Clearly, when $H=H'$ is perfect, 
$\F_{q_1}$ coincides with the smallest subfield of $\F_q$ such that a conjugate of $H$ is contained in $\SL_n(q_1)$,
the derived subgroup of  $\GL_n(q_1)\left(\F^*I\right)$.
\end{rem}

\begin{theorem}\label{G2Even}
Let $x_6,y_6$ be as in \eqref{x,y} with $r\in \F_{q}\setminus \F_4$, $8\le q$ even, such that:
\begin{itemize}
\item[{\rm (i)}] $r^{12} + r^9 + r^5 +  r^2 +1\neq 0$;
\item[{\rm (ii)}] $\F_q=\F_2[ r^2+r]$.
\end{itemize}
Then $H=\langle x_6,y_6\rangle=G_2(q)$ and there exists
$r\in \F_{q}\setminus \F_4$ satisfying {\rm (i)} and {\rm (ii)}.
\end{theorem}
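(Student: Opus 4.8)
The plan is to first place $H$ inside $G_2(q)$, then rule out every maximal overgroup, and finally settle existence of a good parameter $r$ by counting.

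First I would invoke condition (i) together with Lemma \ref{lemma: irreducibility}: since $r^{12}+r^9+r^5+r^2+1\ne 0$, the group $H$ is absolutely irreducible, whence $H\le G_2(q)$ by that same lemma (which rests on Theorem \ref{main}). It then remains to prove $H=G_2(q)$, for which I would show $H$ lies in no maximal subgroup $M$ of $G_2(q)$. As $H$ is absolutely irreducible, only the absolutely irreducible maximal subgroups are relevant; by \cite{C} these are of type $\SL_3(q).2$, $\SU_3(q^2).2$ and $G_2(q_0)$ with $\F_{q_0}<\F_q$. Moreover $H$ is perfect, being a quotient of the triangle group $\langle x,y\mid x^2=y^3=(xy)^7=1\rangle$, whose abelianisation is trivial.

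For the first two types I would combine perfectness with Lemma \ref{PSU(3,q)}: the derived subgroups of $\SL_3(q).2$ and $\SU_3(q^2).2$ are $\SL_3(q)$ and $\SU_3(q^2)$, so $H=H'\le M$ forces $H$ into $\SL_3(q)$ or $\SU_3(q^2)$, against Lemma \ref{PSU(3,q)}. The genuine obstacle is the subfield case $H\le G_2(q_0)$, which I would attack via the field of definition of Remark \ref{F1}. Over a finite field the Schur index is $1$, so an absolutely irreducible group is realizable over exactly the field generated by the traces of its elements; hence the field of definition of $H$ equals $\F_2[\tr{w}:w\text{ a word in }x_6,y_6]$. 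I would show this trace field is precisely $\F_2[r^2+r]$: the upper bound by exhibiting a $\GL_6(\F)$-conjugacy between the triple attached to $r$ and the one attached to $r+1$, which forces every trace, viewed as a rational function of $r$, to be invariant under $r\mapsto r+1$ and thus to lie in $\F_2[r^2+r]$; the lower bound by computing one explicit trace generating $\F_2[r^2+r]$ over $\F_2$. Condition (ii) then yields field of definition $=\F_q$, so no conjugate of $H$ lies in any $G_2(q_0)$ with $\F_{q_0}<\F_q$, completing the identification $H=G_2(q)$.

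It remains to produce some $r\in\F_q\setminus\F_4$ satisfying (i) and (ii). I would count: condition (i) excludes at most $12$ values, the requirement $r\notin\F_4$ at most $4$, and (ii) fails only when $r^2+r$ lies in a maximal subfield $\F_{q'}$, which contributes at most $2q'$ solutions of $r^2+r\in\F_{q'}$. Since maximal subfields have order at most $q^{1/2}$, the total number of excluded $r$ is $O(\sqrt q)$, strictly less than $q$ for $q\ge 8$; the very smallest $q$ are checked directly (for $q=8$, every $r\in\F_8\setminus\F_4$ works, as $r^{12}+r^9+r^5+r^2+1=1$ there and $\F_2[r^2+r]=\F_8$ automatically). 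The step I expect to be the main obstacle is the trace-field computation: pinning the field of definition down to exactly $\F_2[r^2+r]$—in particular establishing the $r\mapsto r+1$ symmetry and selecting a word whose trace generates this field—is where the real work lies.
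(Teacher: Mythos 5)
Your argument has the same skeleton as the paper's proof: condition (i) plus Lemma \ref{lemma: irreducibility} places $H$ in $G_2(q)$; perfectness of the Hurwitz group $H$ pushes it into $M'$ for any maximal overgroup $M$; Lemma \ref{PSU(3,q)} eliminates $\SL_3(q)$ and $\SU_3(q^2)$; the field of definition together with condition (ii) eliminates the subfield subgroups $G_2(q_0)$; and a subfield count gives existence of $r$. The one genuine divergence is the field-of-definition step. The paper disposes of it by citing \cite[Lemma 3.2]{TV}, which states that the minimal field of definition of $H$ is $\F_2[r^2+r]$, whereas you propose to rederive this. Your strategy is sound: over a finite field Schur indices are trivial, so for absolutely irreducible $H$ the field of definition is the trace field, and since the extension $\F_2[r]/\F_2[r^2+r]$ is (at most) quadratic with Galois involution $r\mapsto r+1$, a $\GL_6(\F)$-conjugacy between the triples attached to $r$ and to $r+1$ is exactly the descent datum forcing every trace into $\F_2[r^2+r]$. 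But as written this remains a sketch, and you flag it yourself as the main obstacle: you exhibit neither the conjugating matrix (upper bound) nor a word whose trace generates $\F_2[r^2+r]$ (lower bound, which is what makes condition (ii) actually exclude every $G_2(q_0)$). So your proof is complete only modulo this computation, which the paper settles by citation.

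On existence, your count $12+4+\sum_{q'}2q'$ does not settle $q=16$: the maximal subfield $\F_4$ alone contributes up to $8$ bad values, giving at least $24>16$ exclusions, so $q=16$ must join $q=8$ on the list of cases ``checked directly,'' and that check needs care. For instance, with $\F_{16}=\F_2[t]$, $t^4+t+1=0$, the generator $r=t$ has $r^2+r=t^5\in\F_4$, so (ii) fails, while $r=t^7$ satisfies both (i) and (ii); even the paper's own prescription for $q=8,16$ (``take as $r$ a generator of $\F_q^\ast$'') must be read as ``a suitable generator'' when $q=16$. By contrast, your $q=8$ verification is correct and in fact cleaner than the paper's treatment of that case: $r^7=1$ forces $r^{12}+r^9+r^5+r^2+1=1$, and since $\F_8\cap\F_4=\F_2$, every $r\in\F_8\setminus\F_2$ satisfies $\F_2[r^2+r]=\F_8$ automatically.
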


\begin{proof}
Condition (i) implies that $H$ is absolutely irreducible (Lemma \ref{lemma: irreducibility}) and by the same
lemma we have that $H\leq G_2(q)$.  Let $M$ be a maximal subgroup of $G_2(q)$ which contains $H$. 
Since $H$ is a Hurwitz group, it is perfect and so it is contained in the derived subgroup $M'$ of $M$. 
By  \cite[Lemma 3.2]{TV}, the minimal field of definition of $H$ is $\F_2[ r^2+r]$. 
Thus Condition (ii) gives that $M'\neq G_2(q_0)$ for any $q_0$ such that $\F_{q_0}$ is a
proper subfield of $\F_q$.
Moreover $M'\not\in \{\SL_3(q),\SU_3(q^2)\}$ by Lemma \ref{PSU(3,q)}. It follows that $H=G_2(q)$.
We now prove that, for $q>4$, there exists $r\in \F_{q}\setminus \F_4$, where $q=2^a$, satisfying Conditions (i) and (ii).
For each $\alpha\in \F_q$ such that $\F_2[\alpha]\neq \F_q$ there are at most two values of $r$ such that $r^ 2+r=\alpha$.
Let $N(a)$ be the number of elements $r\in \F_{2^a}$ such that $\F_2[r^ 2+r]\neq \F_{2^a}$.
Considering the possible subfields of $\F_{2^a}$, we get
$$N(a)\leq 2\left(2+2^ 2+2^3+\ldots+2^ {\lfloor\frac{a}{2} \rfloor}\right)= 2^ {2+\lfloor\frac{a}{2} \rfloor}-4.$$ 
Next, observe that if $a\geq 5$, then 
$\left(2^ {2+\lfloor\frac{a}{2} \rfloor}-4 \right)+12<2^ a$.
It follows that for $a\geq 5$, there exists $r\in \F_{2^a}\setminus \F_4$ satisfying Conditions (i) and (ii).
Finally for $q=8,16$, it suffices to take as $r$ a generator of $\F_{q}^ \ast$.
\hfill $\square$ \end{proof}
 
The following Remark gives Hurwitz generators of the Janko group $J_2$ over $\F_4$.

\begin{rem}
For $q=4$, consider the matrices (belonging to family (IIa) of \cite{V3})
\begin{equation}\label{J2}
x_{J_2}=\begin{pmatrix}
0 & 0 & 1 & 0 & \omega & 0 \\
0 & 0 & 0 & 1 & 1 & \omega^2\\
1 & 0 & 0 & 0 & 0 & \omega \\
0 & 1 & 0 & 0 & \omega^2 & 1 \\
0 & 0 & 0 & 0 & 0   & 1\\
0 & 0 & 0 & 0 & 1   & 0
    \end{pmatrix},\quad
y_{J_2}=\begin{pmatrix}
  1 & 0 & 0 & 0 & \omega^2 & \omega^2\\
  0 & 1 & 0 & 0 & \omega & \omega^2 \\
  0 & 0 & 0 & 0 & 1   & 0\\
  0 & 0 & 0 & 0 & 0   & 1 \\
  0 & 0 & 1 & 0 & 1   & 0 \\
  0 & 0 & 0 & 1 & 0   & 1
  \end{pmatrix},
\end{equation}
where $\omega^2+\omega+1=0$.
By a Magma calculation the group $H=\left\langle x_{J_2} , y_{J_2}\right\rangle$ as in \eqref{J2}
 is isomorphic to $J_2$.
 \end{rem}

Now, we consider the case $q\geq 5$ odd. In this case  our generators have similarity invariants \eqref{inv7} hence are conjugate to the matrices in \eqref{shape7}, even if they have different shapes. Indeed we
take them from family (II) in \cite{TV2}. 

For each  $r\in \F_q$ define $H=\langle x_7,y_7\rangle$, where

\begin{equation}\label{7odd}
x_7=\begin{pmatrix}
 0 &  0 & 0 & 1 & 0 & 0 & -4\\
 0 &  0 & 0 & 0 & 1 & 0 & r \\
 0 &  0 & 0 & 0 & 0 & 1 & -3 \\
 1 &  0 & 0 & 0 & 0 & 0 & -4 \\
 0 &  1 & 0 & 0 & 0 & 0 & r \\
 0 &  0 & 1 & 0 & 0 & 0 & -3 \\
 0 &  0 & 0 & 0 & 0 & 0 & -1 
\end{pmatrix},\;
y_7=
\begin{pmatrix}
 1 & 0 & 0 & 0 & 1 & 0 & r+2\\
 0 & 1 & 0 & 0 & 2 & 0 & 2r+8\\
 0 & 0 & 1 & 1 & 0 & 0 & -4 \\ 
 0 & 0 & 0 & 0 &-1 & 0 & 0 \\
 0 & 0 & 0 & 1 &-1 & 0 & 0 \\
 0 & 0 & 0 & 0 & 0 & 0 & -1 \\ 
 0 & 0 & 0 & 0 & 0 & 1 & -1
\end{pmatrix}.
\end{equation}

\begin{lemma}\label{Irrodd}
The group $H$ is absolutely irreducible, except when:
$$d=r^2+15 r+ 100=0.$$
In particular, if $H$ is absolutely irreducible, then it is a subgroup of $G_2(q)$.
\end{lemma}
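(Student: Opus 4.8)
The plan is to prove Lemma \ref{Irrodd} in two halves, mirroring the structure of Lemma \ref{lemma: irreducibility} for the even case. The statement asserts an explicit irreducibility criterion for the concrete matrices $x_7,y_7$ of \eqref{7odd}, together with the containment $H\le G_2(\F)$ once irreducibility is known. The second assertion is the cheap one: it follows immediately from Theorem \ref{p7}. Indeed, the pair $(x_7,y_7)$ is built from family (II) of \cite{TV2}, so once $H=\langle x_7,y_7\rangle$ is absolutely irreducible it is an irreducible subgroup of $\SL_7(\F)$ generated by a non-rigid $(2,3,7)$-triple, i.e.\ a triple with similarity invariants \eqref{inv7}; Theorem \ref{p7} then yields $\char\F\neq 2$ and $H\le G_2(\F)$ directly. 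So the real content is the absolute-irreducibility criterion, and I would dispose of the $G_2$-containment in a single sentence at the end.

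For the irreducibility half, I would follow exactly the device used in Lemma \ref{lemma: irreducibility}, namely invoke the companion irreducibility criterion already worked out for these families. The matrices \eqref{7odd} are the odd-characteristic analogue of \eqref{x,y}, taken from family (II) of \cite{TV2}, and the natural statement to cite is the irreducibility lemma of \cite{TV} (the analogue of \cite[Lemma 2.1]{TV} used in the even case) applied to $q\ge 5$ odd with $r\in\F_q$. The point of that cited result is that the only obstruction to absolute irreducibility is the vanishing of a single explicit polynomial in the parameter $r$; here that polynomial is $d=r^2+15r+100$. Thus I would write: \emph{for the absolute irreducibility of $H$, apply the relevant irreducibility lemma of \cite{TV} to the case $5\le q$ odd, $r\in\F_q$}, and record that the exceptional locus is precisely $d=0$.

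Concretely, the proof body I would write is short. First I would state that $H$ fails to be absolutely irreducible exactly when $d=r^2+15r+100=0$, citing the irreducibility criterion from \cite{TV} for family (II) of \cite{TV2} in odd characteristic, in direct parallel with the proof of Lemma \ref{lemma: irreducibility}. Then, for the second claim, I would observe that when $H$ is absolutely irreducible the triple $(x_7,y_7,x_7y_7)$ has similarity invariants \eqref{inv7}, so $H$ is an irreducible subgroup of $\SL_7(\F)$ generated by a non-rigid $(2,3,7)$-triple, and hence $H\le G_2(\F)$ by Theorem \ref{p7}.

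The main obstacle I anticipate is not conceptual but bookkeeping: one must verify that the specific matrices \eqref{7odd} genuinely fall under the hypotheses of the cited irreducibility lemma of \cite{TV}, i.e.\ that the parametrisation and normalisations match family (II) of \cite{TV2} so that the single exceptional polynomial really is $d=r^2+15r+100$ and not some conjugate or rescaled variant. If the cited lemma is stated for a slightly different normal form, a short conjugation or change-of-basis argument would be needed to align \eqref{7odd} with it, and one would have to check that this does not alter the exceptional locus. Everything else—the $(2,3,7)$ orders, the similarity invariants \eqref{inv7}, and the passage to $G_2(\F)$—is handed to us by \cite{TV2} and Theorem \ref{p7} respectively, so no further computation with Scott's formula is required at this stage.
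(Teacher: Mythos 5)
Your overall architecture matches the paper's proof exactly: it is a two-line citation argument, disposing of absolute irreducibility by appeal to the source of the matrices and of the $G_2$-containment by Theorem \ref{p7} (the paper's proof of the second claim is verbatim what you propose). The one genuine defect is the bibliographic pointer for the irreducibility half. You cite a (hypothetical) odd-characteristic analogue of \cite[Lemma 2.1]{TV}, but \cite{TV} is the $\PSp_6(q)$ paper: its Lemma 2.1 is the irreducibility criterion for the \emph{six-dimensional} family \eqref{x,y} invoked in Lemma \ref{lemma: irreducibility}, and it contains no seven-dimensional statement, in particular no criterion with exceptional locus $r^2+15r+100=0$; as written, that step of your proof has nothing to point at and would fail. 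The correct source is \cite[Theorem 1, page 2131]{TV2} --- the very paper from which family (II) and the matrices \eqref{7odd} are taken --- whose proof contains precisely the computation showing that $\langle x_7,y_7\rangle$ is absolutely irreducible unless $d=r^2+15r+100=0$. Note that this also dissolves the ``main obstacle'' you anticipate at the end: since \eqref{7odd} is family (II) of \cite{TV2} with no renormalisation, no conjugation or change-of-basis matching is needed, and the exceptional polynomial transfers as is. With the citation corrected from \cite{TV} to \cite[Theorem 1]{TV2}, your proposal coincides with the paper's proof.
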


\begin{proof}
For the absolute irreducibility of $H$ see the proof of  \cite[Theorem 1, page 2131]{TV2}.
For the second claim of the statement, apply Theorem   \ref{p7}. 
\hfill $\square$ \end{proof}
We notice that $[x_7,y_7]=x_7y_7^{-1}x_7y_7$ has trace $3$ and characteristic polynomial 
\begin{equation}\label{char7}
\chi_{[x_7,y_7]}(t)=t^ 7 -3t^ 6  -(d-5) t^ 5 -(d+7) t^ 4 + (d+7) t^ 3 + (d-5)t^ 2+3t-1.
\end{equation}
Again we want  to exclude that $H$ 
is contained in a maximal subgroup of $G_2(q)$. For $q$ odd, the classification is due 
to P. Kleidman in \cite{K}. We summarize in Table \ref{list} the relevant information of 
\cite[Tables 8.41, 8.42 pages 397--398]{H}.

\begin{table}[!h]
\begin{tabular}{ll}
Subgroup & Conditions \\\hline 
$G_2(q_0)$ &  $\F_{q_0}$ is a subfield of $\F_q$\\
${2^ 3}^.\SL_3(2)$ &   $q=p$\\
$\PSL_2(13)$ &  $q=p\equiv \pm 1,\pm 3,\pm 4\pmod{13}$ or \\
& $q=p^ 2,\ p\equiv \pm 2, \pm 5,\pm 6\pmod{13}$\\
$\PSL_2(8)$ &  $q=p\equiv \pm 1\pmod 9$ or $q=p^ 3,\ p= \pm 2,\pm 4\pmod{9}$\\
$G_2(2)= U_3(3):2$ &   $q=p\ge 5$\\
$J_1$ &  $q=11$\\
$\PGL_2(q)$ &  $p\ge 7$, $q\ge 11$\\ 
$\SL_3(q):2$  &  $p=3$\\
$\SU_3(q^2):2$ & $p=3$\\
$^2G_2(3^a)$ &  $p=3$, $a$ odd
\end{tabular}
\caption{Maximal irreducible subgroups of $G_2(q)$, $q=p^a$, $p\ne 2$.} \label{list}
\end{table}

Since $H$ is a Hurwitz group, it is perfect. So if $H$ is contained in a
maximal subgroup $M$ then $H\le M'$.

\begin{lemma}\label{vari}
Assume $5\le q$ odd, $H$ absolutely irreducible. Then:
\begin{itemize} 
\item[{\rm (i)}] $H$ is never contained in any maximal subgroup $M$ isomorphic to one of the following:\enskip
$\PSL_2(13),\ G_2(2),\ {2^3}^.\PSL_3(2),\ J_1$;
\item[{\rm (ii)}] $H$ is contained in a maximal subgroup $M\cong \PSL_2(8)$ iff $q=17$ and $r=1$.
\end{itemize}
\end{lemma}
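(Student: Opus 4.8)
The plan is to use perfectness of $H$ to pass to derived subgroups, and then to exploit the single ``moving'' invariant of the configuration, namely the characteristic polynomial \eqref{char7} of $w=[x_7,y_7]$. Since $H$ is $(2,3,7)$-generated it is perfect, so whenever $H\le M$ we in fact have $H\le M'$. Here $\PSL_2(13)$ and $J_1$ are simple, $G_2(2)'=\PSU_3(3)$, and $2^3.\PSL_3(2)$ is perfect (it is a split or non-split extension of the perfect group $\PSL_3(2)$ by the irreducible natural module $2^3$, hence self-derived); each is a fixed finite group, embedded in $G_2(q)\le\GL_7(\F)$ in characteristic $p$. The element $w\in H\le M'$ has $\tr{w}=3$ by \eqref{char7}, and all its eigenvalues are roots of unity whose order divides $\exp(M')$.

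First I would record a structural reduction of \eqref{char7}. One checks $\chi(1)=0$ and factors $\chi(t)=(t-1)q(t)$ with $q$ palindromic of degree $6$; writing $s=\lambda+\lambda^{-1}$ for the non-trivial eigenvalue pairs $\lambda,\lambda^{-1}$ of $w$, the three values $s_1,s_2,s_3$ are exactly the roots of the cubic
$$p(s)=s^3-2s^2-ds-2d,\qquad d=r^2+15r+100.$$
Vieta then gives the two relations $s_1+s_2+s_3=2$ (equivalent to $\tr{w}=3$) and $s_1s_2s_3=-2(s_1s_2+s_1s_3+s_2s_3)$, both independent of $d$, while $-d=s_1s_2+s_1s_3+s_2s_3$ recovers $d$ from the eigenvalues. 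This turns ``$w\in M'$'' into a finite, purely arithmetic search: for each admissible order $n$ of $M'$ the $s_i$ must be of the form $\zeta+\zeta^{-1}$ with $\zeta^n=1$ in $\overline{\F_p}$, and must satisfy the two $d$-free relations.

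For part (i) I would run this search group by group, reading the eigenvalue multisets of $w$ off the known $7$-dimensional Brauer characters of $\PSL_2(13)$, $\PSU_3(3)$, $2^3.\PSL_3(2)$ (and of its subgroup $\PSL_3(2)$) and, for $q=11$, of $J_1$. The unipotent possibility is disposed of at once: $w=1$ or $w$ of order $p$ would force $\chi(t)=(t-1)^7$, i.e. $p(s)=(s-2)^3$, whose $s^2$-coefficient $-6$ cannot equal $-2$. For the semisimple classes one checks that either no root-of-unity triple meets both $d$-free relations, or the forced value $d=-(s_1s_2+s_1s_3+s_2s_3)$ makes $d=r^2+15r+100$ vanish -- excluded by absolute irreducibility via Lemma \ref{Irrodd} -- or it cannot be realised over the field $\F_q$ for which $M$ is actually maximal according to Table \ref{list}.

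For part (ii) the situation is genuinely different, since $\PSL_2(8)$ is itself Hurwitz, so a real solution must be produced and shown unique. Matching \eqref{char7} against the $7$-dimensional characteristic polynomials of the order-$7$ and order-$9$ classes of $\PSL_2(8)$ (element orders $1,2,3,7,9$) pins $d$ down to a short explicit list; solving $r^2+15r+100=d$ over $\F_q$ and imposing both absolute irreducibility and the congruence condition under which $\PSL_2(8)\le G_2(q)$ (namely $q=p\equiv\pm1\pmod 9$ or $q=p^3$, $p\equiv\pm2,\pm4\pmod9$) isolates $q=17$, $r=1$; a final direct check confirms $H\cong\PSL_2(8)$ there. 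The main obstacle throughout is the enumeration step: it rests on having the correct $7$-dimensional eigenvalue data for each finite group in the relevant characteristic, and in part (ii) the matching must be sharp enough to leave exactly the pair $(q,r)=(17,1)$ and no other, which is the most delicate point precisely because here a solution really exists.
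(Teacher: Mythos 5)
Your reduction is the same as the paper's in its skeleton: perfectness of the Hurwitz group $H$ gives $H\le M'$, and the exclusion is run by matching the commutator $w=[x_7,y_7]$ against the possible $(2,3,7)$-commutators of each candidate $M'$, using character-table data in the $7$-dimensional representation. Where you genuinely diverge is in the invariant used for the match. The paper only equates $\tr{w}=3$ with the possible commutator traces (this settles $\PSL_2(13)$, $\PSL_2(8)$, $2^3{\cdot}\SL_3(2)$ for $p\ne 7$, and $J_1$ at $q=11$), and then resorts to ad hoc arguments where the trace is not decisive: for $M\cong G_2(2)$, $M'=\SU_3(3)$, it computes entries of $D=[x_7,y_7]^4$ explicitly ($D_{7,7}=5r-7$, then $D_{2,7}=-36/5$ at the unique surviving $r$) to show $w$ cannot have order $4$; for $2^3{\cdot}\SL_3(2)$ at $p=7$ it bounds the order of $w$ from below. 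Your device --- factoring \eqref{char7} as $\chi(t)=(t-1)q(t)$ with $q$ palindromic and passing to $s=\lambda+\lambda^{-1}$, which I checked does yield $p(s)=s^3-2s^2-ds-2d$ and hence the two $d$-free relations $s_1+s_2+s_3=2$ and $s_1s_2s_3=-2(s_1s_2+s_1s_3+s_2s_3)$ --- is strictly stronger than the trace test and in fact uniformizes precisely the case the trace cannot settle: for an order-$4$ element in $\SO_7(\F)$, $p$ odd, the only $s$-triple from $\{2,-2,0\}$ meeting both relations is $(2,0,0)$ (the triple $(2,2,-2)$ forces $p=2$), i.e.\ spectrum $1,1,1,\pm i,\pm i$, which has trace $3$ but forces $d=0$, excluded by absolute irreducibility via Lemma \ref{Irrodd}. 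So your method replaces the paper's matrix computation for $\SU_3(3)$ by a clean spectral argument, and your unipotent dispatch ($(s-2)^3$ versus $p(s)$ forcing $4\equiv 0$, i.e.\ $p=2$ --- note the comparison must be read mod $p$) also absorbs the paper's $p=7$ case of $2^3{\cdot}\SL_3(2)$, where the order-$7$ commutator is unipotent.

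The weak point is that your proposal stops at the plan: the group-by-group enumeration in (i) is asserted, not executed, and in (ii) the uniqueness of $(q,r)=(17,1)$ is exactly the step you flag as delicate without closing it. It does close: matching against an order-$9$ spectrum $1,\zeta^{\pm a},\zeta^{\pm b},\zeta^{\pm c}$ and imposing $s_1+s_2+s_3=2$ forces $\theta^3-3\theta+1$ to have the root $-3$, i.e.\ $p=17$ (the paper's $3^3-9-1=17$), and then the forced value is $d\equiv -3\pmod{17}$, for which $r^2+15r+100-d\equiv(r-1)^2\pmod{17}$, so $r=1$ comes out as a double root and no second value of $r$ survives; the paper instead verifies directly that at $q=17$ the element $[x_7,y_7]$ has order $9$ only for $r=1$, and both proofs still need the final positive check that $H\cong\PSL_2(8)$ there. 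You should also say explicitly that the order-$7$ possibility in $\PSL_2(8)$ dies because the corresponding $s$-sum is $-1\ne 2$ unless $p=3$, which Table \ref{list} excludes. With these computations written out, your argument is a complete and somewhat more systematic variant of the paper's proof, resting on the same character-theoretic input.
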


\begin{proof}
We proceed with a case by case analysis. If $H\le M$, then $(x_7,y_7,x_7y_7)$ must coincide with some
 $(2,3,7)$-triple $\left(g_2,g_3, g_2g_3\right)$ in $M$.
Since the commutator $[x_7,y_7]$ has trace $3$, we are interested in those triples such that $3=\tr{[g_2,g_3]}$. The possible values 
of these traces are read from the ordinary and modular character tables of $M$.

\begin{itemize}
\item[(\rm{a})] Assume  $H\le M\cong\PSL_2(13)$, with the restrictions on $q$ given in  Table \ref{list}: in particular $p\ne 
3$.
In this case
the order of $[g_2,g_3]$ is either $6$ or $7$ or $13$ with  trace
$1$,  $0$ or $(1\pm \sqrt{13})/2$, respectively.
Equating these values to $3$ we get either $p=2$ or $p=3=q$, in contrast with our assumptions.

\item[(\rm{b})] Assume $H\le M\cong\PSL_2(8)$ with the restrictions on $q$ given in  Table \ref{list}. 
In this case $[g_2,g_3]$ has order $9$ and its trace is a root of $t^3-3t-1$. 
Equating its trace to $3$, we get $3^3-9-1=17=0$, whence $p=17=q$.
On the other hand, for $q=17$, the element $[x_7,y_7]$ has order $9$ only when $r=1$.
In this case $H\cong \PSL_2(8)$.

\item[(\rm{c})] Assume $H\le M\cong G_2(2)$,  $q=p\ge 5$. Since $H$ is perfect, we 
have that $H$ is actually contained in $M'=\SU_3(3)$. In this case $[g_2,g_3]$ has
order $4$. Taking $D=[x_7,y_7]^ 4$, we have $D_{7,7}=5r-7$.
If $p=5$, then $D_{7,7}=3\ne 1$. If $p\neq 5$, set
$r=\frac{8}{5}$. Then $D_{2,7}=-\frac{36}{5}\neq 0$, since $p\geq 7$.

\item[(\rm{d})] Assume $H\le M\cong {2^ 3}^.\PSL_3(2)$ with $q=p\geq 5$.
If $p\neq 7$, then the trace of $[g_2,g_3]$ is in $\{0,\pm 1\}$. As in 
(a) above we  get either $p=2$ or $p=3=q$, against our assumptions.
If $q=p=7$, then  $[x_7,y_7]$ has order $\geq 19$, so it is cannot be an element
of ${2^ 3}^ .\PSL_3(2)$.

\item[(\rm{e})] Assume $M\cong J_1$ with $q=11$.
The element $[g_2,g_3]$ has order $10$, $11$, $15$ or  $19$ with respective traces $9$, $7$, $5$ and $4$.
Since $\tr{[x_7,y_7]}=3$ we have a contradiction.
\end{itemize}
\hfill $\square$ \end{proof}

\begin{lemma}\label{SL3}
If $p=3$ and $H$ is absolutely irreducible, then it is not contained in $\SL_3(q)$ or $\SU_3(q^ 2)$.
\end{lemma}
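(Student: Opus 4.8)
Since $\F$ is algebraically closed, $\SU_3(q^2)\le\SL_3(\F)$, so it suffices to treat the case $H\le\SL_3(\F)$; write $N=\F^3$ for the natural module. In characteristic $3$ the embedding $\SL_3\le G_2$ realises $\F^7$ as the $7$-dimensional irreducible module $L(1,1)\cong\mathfrak{sl}_3(\F)/\langle I\rangle$ (note $I\in\mathfrak{sl}_3$ since $\tr{I}=3=0$), a composition factor of $\mathfrak{gl}_3(\F)=N\otimes N^\ast$; see \cite{K,KL}. The plan is to push the problem down to $N$, where \cite{TV1} applies. First I would show that $H$ acts irreducibly on $N$: otherwise $H$ fixes a line or a plane of $N$, hence lies in a proper parabolic subgroup of $\SL_3$, which stabilises a proper nonzero subspace of $N\otimes N^\ast$ and therefore of every subquotient, in particular of $\F^7$, contradicting the absolute irreducibility of $H$.

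Once $H$ is irreducible on $N$, I would use that $\SL_3(\F)$ has trivial centre in characteristic $3$, so the conjugation action on $L(1,1)$ is faithful; hence the preimages $\tilde x,\tilde y\in\SL_3(\F)$ of $x_7,y_7$ satisfy that $(\tilde x,\tilde y,\tilde x\tilde y)$ is an irreducible $(2,3,7)$-triple in $\GL_3(\F)$. By \cite{TV1} every such triple is rigid, hence determined up to conjugacy by its similarity invariants; in particular the $\SL_3(\F)$-class of the commutator $c=[\tilde x,\tilde y]$ is fixed, and computing it (the triple generates a copy of $\PSL_2(7)$, in which the commutator of any pair of $(2,3,7)$-generators has order $4$) shows that $c$ has order $4$. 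Writing $\tau=\tr{c}$ and $\bar\tau=\tr{c^{-1}}$, the image of $c$ on $N\otimes N^\ast=\mathfrak{gl}_3$ has trace $\tau\bar\tau$, and passing to the subquotient $\mathfrak{sl}_3/\langle I\rangle$ removes the two trivial factors, giving trace $\tau\bar\tau-2=\tau\bar\tau+1$. Since this image is $[x_7,y_7]$, whose trace is $3=0$, we get $\tau\bar\tau=-1$. Among order-$4$ elements of determinant $1$ this forces the eigenvalue multiset $\{\mathrm i,\mathrm i,-1\}$ with $\mathrm i^2=-1$ (the alternative $\{\mathrm i,-\mathrm i,1\}$ gives $\tau\bar\tau=1$).

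With the eigenvalues of $c$ known, the eigenvalues of its image on $\mathfrak{sl}_3/\langle I\rangle$ are $1$ together with the ratios $\alpha_p/\alpha_q$ with $p\ne q$, namely $\{1,1,1,\mathrm i,\mathrm i,-\mathrm i,-\mathrm i\}$, so that image has characteristic polynomial $(t-1)^3(t^2+1)^2$. On the other hand the image is $[x_7,y_7]$, whose characteristic polynomial is \eqref{char7}. Reducing modulo $3$, where $(t-1)^3=t^3-1$ and $(t^2+1)^2=t^4-t^2+1$, the comparison of $(t-1)^3(t^2+1)^2=t^7-t^5-t^4+t^3+t^2-1$ with the reduction $t^7-(d+1)t^5-(d+1)t^4+(d+1)t^3+(d+1)t^2-1$ of \eqref{char7} forces $d+1=1$, i.e. $d=r^2+15r+100=0$. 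This contradicts Lemma \ref{Irrodd}, which requires $d\ne 0$ for $H$ to be absolutely irreducible; and since the argument never used whether $\tau,\bar\tau$ lie in $\F_q$ or are conjugate over $\F_{q^2}$, it excludes both $\SL_3(q)$ and $\SU_3(q^2)$. I expect the middle step to be the main obstacle: the orders of $x_7$, $y_7$ and $x_7y_7$ do not by themselves separate $\SL_3$ from $G_2$ in this $7$-dimensional representation, so the contradiction must be extracted from the commutator, using rigidity in dimension $3$ to pin down its conjugacy class precisely.
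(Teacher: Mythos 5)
Your overall architecture (restrict the $7$-dimensional module to $\SL_3$, identify it in characteristic $3$ with $L(1,1)\cong\mathfrak{sl}_3(\F)/\langle I\rangle$, and extract a contradiction from the commutator) is viable, and your trace and characteristic-polynomial computations check out: the eigenvalues of the adjoint image are indeed $1$ together with the ratios, the trace on $L(1,1)$ is $\tau\bar\tau-2$, and the comparison of $(t-1)^3(t^2+1)^2$ with the reduction of \eqref{char7} modulo $3$ does force $d=0$. But there is a genuine gap at the load-bearing step, namely the parenthetical claim that ``the triple generates a copy of $\PSL_2(7)$''. In characteristic $3$ the group $\PSL_2(27)\cong\Omega_3(27)$ also embeds irreducibly in $\SL_3(\F)$ and is Hurwitz (Macbeath, since $3\equiv 3\pmod 7$), so a priori the lifted triple could generate $\PSL_2(27)$ instead; and $\PSL_2(27)$ has no elements of order $4$ at all (its element orders divide $3$, $13$ or $14$), so in that case $c=[\tilde x,\tilde y]$ would not have order $4$ and your entire eigenvalue analysis collapses. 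Rigidity does not close this by itself: it pins the triple up to conjugacy \emph{once the similarity invariants are fixed}, but you never determine those invariants, and both the $\PSL_2(7)$-type ones (with $\tilde x\tilde y\sim\diag(\eps,\eps^2,\eps^4)$) and the $\PSL_2(27)$-type ones (with $\tilde x\tilde y\sim\diag(1,\zeta,\zeta^{-1})$) are admissible in $\SL_3(\F)$. Eliminating $\PSL_2(27)$ is precisely where the paper spends half of its (much shorter) proof: it quotes that the Hurwitz subgroups of $\SL_3$ are $\PSL_2(7)$ or $\PSL_2(27)$ and kills the latter because it has no irreducible $7$-dimensional representation. In your setup the repair is one line, but it must be said: eigenvalues $\{1,\zeta,\zeta^{-1}\}$ give adjoint eigenvalues $\{1,\zeta^{\pm 1},\zeta^{\pm 1},\zeta^{\pm 2}\}$ with repetitions, incompatible with the similarity invariant $t^7-1$ of $x_7y_7$.

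Two further remarks. First, your reduction ``$H$ reducible on $N$ implies the parabolic stabilises a proper nonzero subspace of every subquotient of $N\otimes N^\ast$'' is a non sequitur as stated (groups routinely act reducibly on a module yet irreducibly on a composition factor); the correct fix is that $H\le P$ parabolic implies the fixed-point space of the unipotent radical of $P$ on $\F^7$ is nonzero (a nontrivial $3$-group in characteristic $3$), proper (the radical acts faithfully on the irreducible $L(1,1)$), and $P$-invariant, contradicting the irreducibility of $H$ on $\F^7$. Second, once rigidity really does pin the class of $c$, you should use it: in the $\PSL_2(7)$ triple the commutator has trace $1$ on $N$, i.e.\ eigenvalues $\{\mathrm{i},-\mathrm{i},1\}$, whence its trace on $L(1,1)$ is $1\cdot 1-2=-1\neq 0=\tr{[x_7,y_7]}$ in characteristic $3$ --- which is exactly the paper's one-line contradiction (the paper reads the value $-1$ off the $7$-dimensional Brauer character at the class of order $4$). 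Your configuration $\{\mathrm{i},\mathrm{i},-1\}$, extracted by imposing trace $0$, never occurs for these triples; your detour through the characteristic polynomial and $d=0$ is arithmetically correct and yields a valid contradiction from $c^4=1$ alone, but it bypasses a contradiction already in hand. In short: the approach is genuinely different from the paper's and salvageable, but as written the order-$4$ claim is unproved, and it is the step on which everything else rests.
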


\begin{proof}
The Hurwitz subgroups of $\SL_3(q^ 2)$ are isomorphic to $\PSL_2(7)$ or to $\PSL_2(27)$. 
However $\PSL_2(27)$ does not have irreducible representations of degree $7$.
If $H$ is  contained in $\PSL_2(7)$, then $[x_7,y_7]$ has order $4$, whence  
$\tr{[x_7,y_7]}=-1$, a contradiction.
\hfill $\square$ \end{proof}

\begin{lemma}\label{nctriples}
Assume $H$ absolutely irreducible. Then two triples $(x_7(r_1),y_7(r_1),$ $z_7(r_1))$ and $(x_7(r_2),y_7(r_2),z_7(r_2))$ 
are conjugate if, and only if, $r_1=r_2$. 
\end{lemma}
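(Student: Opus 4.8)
The forward implication is trivial: when $r_1=r_2$ the two triples coincide, so they are conjugate by the identity. For the converse the plan is to extract, from a hypothetical conjugacy, a chain of equalities between conjugation invariants that are explicit polynomials in the parameter, and then to show that these pin $r$ down.

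So suppose $g\in\GL_7(\F)$ satisfies $g\,x_7(r_1)\,g^{-1}=x_7(r_2)$ and $g\,y_7(r_1)\,g^{-1}=y_7(r_2)$, the relation for $z_7=x_7y_7$ being then automatic. Conjugation by $g$ is an automorphism, hence commutes with the formation of words; in particular it carries $[x_7(r_1),y_7(r_1)]$ to $[x_7(r_2),y_7(r_2)]$, so these two commutators have equal characteristic polynomials. By \eqref{char7} that polynomial depends on $r$ only through $d=r^2+15r+100$, and therefore $d(r_1)=d(r_2)$, i.e. $r_1^2+15r_1=r_2^2+15r_2$, which factors as
\[
(r_1-r_2)(r_1+r_2+15)=0.
\]
Hence either $r_1=r_2$, as desired, or $r_1+r_2=-15$.

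It remains to exclude $r_1\neq r_2$ with $r_1+r_2=-15$, which is the substance of the lemma. The commutator cannot decide this: by \eqref{char7} every symmetric function of its eigenvalues, in particular each $\tr{[x_7,y_7]^k}$, is a function of $d$ and hence is invariant under the involution $\iota\colon r\mapsto-15-r$; the traces of $x_7,y_7,z_7$ are likewise constant, these elements lying in fixed conjugacy classes. I would therefore adjoin a further conjugation invariant genuinely sensitive to the modulus---the characteristic polynomial, or simply a trace, of a suitably chosen word $w(x_7,y_7)$. Such an invariant is again a polynomial in $r$ (the entries of $x_7,y_7$ being linear in $r$) and is forced to agree at $r_1$ and $r_2$ by the conjugacy. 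The plan is to produce $w$ for which this invariant, taken together with $d$, determines $r$ uniquely; then $d(r_1)=d(r_2)$ and the agreement of the new invariant leave no room for $r_1\neq r_2$, and we are done.

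The one step that is not formal is exactly this: exhibiting $w$ and checking that its invariant recovers $r$ and not merely $d$. The need for it is intrinsic rather than a defect of the argument, for if every word-trace were a function of $d$ then the absolutely irreducible triples at $r$ and at $\iota r$ would have identical characters, hence be conjugate, contradicting the statement being proved. In practice a short word is located and the verification carried out by a direct computation (e.g. in Magma). Finally, the hypothesis that $H$ be absolutely irreducible is used only to ensure $d\neq0$, so that $r_1$ and $r_2$ really do parametrise the nondegenerate triples of family (II) of \cite{TV2}; the comparison of invariants itself requires nothing beyond the two triples being conjugate.
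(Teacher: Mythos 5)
Your reduction is correct and matches the paper's first step exactly: conjugacy forces equality of the characteristic polynomials of the commutators, and by \eqref{char7} this yields $d(r_1)=d(r_2)$, i.e.\ $(r_1-r_2)(r_1+r_2+15)=0$, so it remains to exclude $r_1+r_2=-15$ with $r_1\ne r_2$. You also correctly observe that no invariant built from $[x_7,y_7]$ alone can do this, and that the remedy is the trace (or a characteristic-polynomial coefficient) of some longer word. But at precisely this point the proof stops: you never exhibit the word nor verify that its invariant separates $r$ from $-15-r$, and you acknowledge as much. That step is not a routine verification to be deferred --- it is the entire content of the lemma once the reduction is made, and your meta-argument for why a suitable word \emph{must} exist (``otherwise the triples at $r$ and $\iota r$ would have identical characters, hence be conjugate, contradicting the statement being proved'') is circular: it derives the existence of $w$ from the very statement under proof, so it cannot replace an explicit construction.

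The paper closes the gap concretely: it takes $w=[x_7,y_7]^2\, y_7\,(x_7y_7)^2$, whose characteristic polynomial has $t^6$-coefficient $f_1(r)=r^3+25r^2+250r+999$. Substituting $r_1=-15-r_2$, one computes $f_1(r_1)-f_1(r_2)=-d\,(2r_2+15)$ with $d=r_2^2+15r_2+100$; absolute irreducibility gives $d\ne 0$ (Lemma \ref{Irrodd}), forcing $r_2=-\tfrac{15}{2}$ and hence $r_1=-15-r_2=r_2$, a contradiction. Note two points your sketch gets slightly wrong or leaves implicit. First, absolute irreducibility is not used merely to ``parametrise nondegenerate triples'': the nonvanishing $d\ne 0$ is load-bearing in the final algebraic step, cancelling the factor $d$ from $f_1(r_1)-f_1(r_2)$. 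Second, your plan that ``the new invariant together with $d$ determines $r$ uniquely'' must also account for the fixed point $r=-\tfrac{15}{2}$ of the involution $\iota$; the paper's argument handles it automatically, since there $r_1=r_2$ and no contradiction with conjugacy arises. As written, your proposal is a correct strategy identical in outline to the paper's, but with the decisive computation missing.
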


\begin{proof}
Assume that $(x_7(r_1),y_7(r_1),z_7(r_1))$ and $(x_7(r_2),y_7(r_2),z_7(r_2))$ are conjugate. 
Then the characteristic polynomial of $[x_7(r_1),y_7(r_1)]$ and $[x_7(r_2),y_7(r_2)]$ are the same, whence 
$r_1^ 2+15r_1=r_2^ 2+15r_2$. 
Suppose that $r_1\neq r_2$. Then $r_1=-15-r_2$.
Now, consider the element $w=[x_7,y_7]^ 2 y_7 (x_7y_7)^ 2 $ whose characteristic polynomial is
$$\chi_w(t)=t^ 7+f_1(r) t^ 6+f_2(r) t^ 5 - f_3(r)t^ 4 +f_3(r) t^ 3-f_2(r) t^ 2-f_1(r) t -1$$
where $f_1(r)=   r^3+25 r^2+250 r+999$.
From $f_1(r_1)=f_1(r_2)$ we get $d(2r_2+15)=0$. Since $H$ is absolutely irreducible, then
$d\neq 0$ and so
$r_2=-\frac{15}{2}$. However, in this case $r_1=-15-r_2=-\frac{15}{2}=r_2$, a contradiction.
We conclude that $r_1=r_2$.
\hfill $\square$ \end{proof}

As mentioned in the Introduction, the group $\PSL_2(q)$ is Hurwitz when either $q=p\equiv 0,\pm 1 \pmod 7$ or
$q=p^3$ and $p\equiv \pm 2, \pm 3\pmod 7$.
Moreover, in $\textrm{Aut}(\PSL_2(q))=\PGamL_2(q)$, there are $3$ conjugacy classes of Hurwitz triples 
for $q=p\equiv \pm 1 \pmod 7$, only one otherwise.

\begin{theorem}
Let $q=p^a$, $p$ odd, $q\ge 5$, and let $x_7,y_7$ be as in \eqref{7odd} with $r \in \F_q$
such that  $r\ne 1$ if $q=17$ and $r^2+15 r +100\neq 0$. Assume further that $\F_q=\F_p[r]$.
Then one of the following holds:
\begin{itemize}
\item[{\rm (i)}] $H=G_2(q)$;

\item[{\rm (ii)}] $H=\PSL(2,q)$ with $q=p$ if $p\equiv 0,\pm 1 \pmod 7$,   
$q=p^3$ if $p\equiv \pm 2, \pm 3\pmod 7$.
\end{itemize}
Moreover, in case {\rm (ii)} it is always possible to choose $r$ such that  
$H= G_2(q)$.
\end{theorem}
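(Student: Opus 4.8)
The plan is to combine the structural results already established with the maximal-subgroup analysis carried out in Lemmas \ref{vari}, \ref{SL3}, and \ref{nctriples}. First I would observe that under the stated hypotheses ($r^2+15r+100\neq 0$), Lemma \ref{Irrodd} guarantees that $H$ is absolutely irreducible and hence $H\leq G_2(q)$. Since $H$ is $(2,3,7)$-generated it is perfect, so if $H$ is properly contained in $G_2(q)$ it must lie in the derived subgroup $M'$ of some maximal subgroup $M$ appearing in Table \ref{list}. The strategy is to eliminate every such $M$, one row at a time, leaving only the possibilities recorded in the statement.

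The bulk of the argument is bookkeeping against Table \ref{list}. For $p=3$ the relevant maximal subgroups $\SL_3(q)$, $\SU_3(q^2)$, and $^2G_2(3^a)$ must be handled: the first two are excluded by Lemma \ref{SL3}, and $^2G_2(3^a)$ can be excluded because $H$ is defined over $\F_p[r]=\F_q$ whereas the Ree group forces additional constraints (alternatively one checks the trace $\tr{[x_7,y_7]}=3$ is incompatible). For $p\geq 5$, Lemma \ref{vari}(i) rules out $\PSL_2(13)$, $G_2(2)$, ${2^3}^.\PSL_3(2)$, and $J_1$; Lemma \ref{vari}(ii) shows $\PSL_2(8)$ occurs only for the excluded pair $q=17$, $r=1$. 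The subfield subgroups $G_2(q_0)$ with $\F_{q_0}<\F_q$ are eliminated by the field-of-definition hypothesis $\F_q=\F_p[r]$, exactly as in the proof of Theorem \ref{G2Even}: by \cite[Lemma 3.2]{TV} (or its $n=7$ analogue) the minimal field of definition of $H$ is generated over $\F_p$ by $r$, so no conjugate of $H$ lies in $G_2(q_0)$. The only surviving maximal subgroup whose derived group can contain $H$ is $\PGL_2(q)$, with $M'=\PSL_2(q)$, giving case (ii). I would then invoke the Hurwitz criterion for $\PSL_2(q)$ recalled just before the theorem: $\PSL_2(q)$ is Hurwitz precisely when $q=p\equiv 0,\pm1\pmod 7$ or $q=p^3$ with $p\equiv\pm2,\pm3\pmod 7$, which pins down the two listed field cases.

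For the final clause, that one can always choose $r$ achieving $H=G_2(q)$, I would use Lemma \ref{nctriples}: distinct admissible values of $r$ give pairwise non-conjugate triples. The number of $\PGL_2(q)$-conjugacy classes of Hurwitz triples in $\PSL_2(q)$ is at most $3$ (as recalled before the theorem), so at most finitely many values of $r$ can yield $H\cong\PSL_2(q)$; since there are infinitely many (or at least more than three) admissible $r$ in $\F_q$ satisfying $\F_q=\F_p[r]$ and $r^2+15r+100\neq 0$, some choice must give $H=G_2(q)$. This counting step mirrors the existence argument in Theorem \ref{G2Even}.

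The main obstacle I expect is the field-of-definition step excluding the subfield subgroups $G_2(q_0)$. Unlike the character-theoretic eliminations in Lemma \ref{vari}, which are finite computations once the triple traces are tabulated, this requires knowing that $r$ itself generates the field over $\F_p$, and that this forces the \emph{minimal} field of definition up to $\F_q$ rather than merely exhibiting $H$ over $\F_q$. In characteristic $2$ this was supplied by \cite[Lemma 3.2]{TV}; here one needs the corresponding statement for the degree-$7$ family \eqref{7odd}, presumably via the trace of $[x_7,y_7]$ or the characteristic polynomial \eqref{char7}, which depends on $r$ through $d=r^2+15r+100$. Making the entry $r$ provably irrational over every proper subfield — and reconciling this with the two-to-one ambiguity analogous to $r\mapsto r^2+r$ in the even case — is where the genuine care is required; the remaining rows of Table \ref{list} reduce to the already-cited lemmas.
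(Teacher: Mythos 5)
Your proposal tracks the paper's proof almost exactly in structure: Lemma \ref{Irrodd} for irreducibility and $H\le G_2(q)$, perfectness pushing $H$ into $M'$, elimination against Table \ref{list} via Lemmas \ref{vari} and \ref{SL3}, the field-of-definition hypothesis killing the subfield subgroups $G_2(q_0)$ (the paper cites \cite[Remark 6]{TV2} for the $n=7$ analogue you ask for), Macbeath \cite{Mac} pinning down case (ii), and the counting argument via Lemma \ref{nctriples} for the final clause. The one genuine gap is your treatment of $M\cong {}^2G_2(3^a)$ for $p=3$, $a$ odd. Your fallback claim that ``the trace $\tr{[x_7,y_7]}=3$ is incompatible'' fails exactly where it is needed: in characteristic $3$ one has $3=0$, so $[x_7,y_7]$ has trace $0$, which is entirely compatible with membership in the Ree group; and your primary suggestion (``the Ree group forces additional constraints'') names no constraint. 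The paper's actual argument here is a real step, not bookkeeping: it invokes B\"a\"arnhielm's result \cite[Proposition 3.14]{B} that two semisimple elements of ${}^2G_2(3^a)$ with the same trace are conjugate; since $x_7y_7$ and $[x_7,y_7]$ both have trace $0$ when $p=3$, if $H\le {}^2G_2(3^a)$ then $[x_7,y_7]^7$ must be a $3$-element and hence have trace $1$, and computing this trace yields $r(r^2-1)=0$, i.e.\ $r\in\F_3$, contradicting $\F_q=\F_3[r]$ (note $q\ge 27$ here). Without an argument of this kind your case analysis leaves ${}^2G_2(3^a)$ alive.

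Two smaller remarks. Your instinct about a two-to-one ambiguity analogous to $r\mapsto r^2+r$ is accurate but attaches to the wrong step: the characteristic polynomial \eqref{char7} depends on $r$ only through $d=r^2+15r+100$, so it cannot by itself distinguish $r$ from $-15-r$; the paper resolves this inside Lemma \ref{nctriples} (which you correctly invoke as a black box) using the auxiliary word $w=[x_7,y_7]^2y_7(x_7y_7)^2$, whose characteristic polynomial has the coefficient $f_1(r)=r^3+25r^2+250r+999$ separating the two roots. Finally, your count at the end should be made explicit as in the paper: at most $3$ values of $r$ to avoid triples generating $\PSL_2(q)$ (one class of maximal $\PGL_2(q)$ in $G_2(q)$, at most three $\PGamL_2(q)$-classes of Hurwitz triples) plus at most $2$ values lost to $r^2+15r+100=0$, against $p\ge 7$ admissible values when $q=p$ and $p^3-p$ when $q=p^3$; ``infinitely many $r$'' is not available over a finite field, so the inequality $p\ge 7>5$, respectively $p^3-p>5$, is what closes the argument.
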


\begin{proof}
The group $H$ is absolutely irreducible by  Lemma \ref{Irrodd} and the assumption $r^2+15 r +100\neq 0$ and so it is a subgroup of $G_2(q)$. 
Suppose that there exists a maximal subgroup $M$ of $G_2(q)$ containing $H$.
Condition $\F_q=\F_p[r]$ implies that $\F_{q}$ is the field of definition of $H$ (see \cite[Remark 6]{TV2}). Thus
$M\neq G_2(q_0)$ for every $q_0< q$. By Lemma \ref{vari} we may also exclude $M'\in \{\PSL_2(13), \PSL_2(8), G_2(2), 
{2^ 3}^.\SL_3(2)\}$.
When $q=3^a$, we may also exclude $M'\in \{\SL_3(q), \SU_3(q^2)\}$ by Lemma \ref{SL3}. If $a$ is odd, suppose $M\cong {}^2G_2(3^a)$. 
By \cite[Proposition 3.14]{B} two semisimple elements of $^2G_2(3^a)$
which have the same trace are conjugate. Now $x_7y_7$ has trace $0$ and, for $p=3$,
also $[x_7,y_7]$ has trace $0$. So $[x_7,y_7]^ 7$ must be a $3$-element and so has trace $1$. 
However this gives $r(r^2-1)=0$ and so $r\in \F_3$, a contradiction with the assumption $\F_q=\F_3[r]$.
Thus $H$ is not contained in 
any subgroup of  Table \ref{list}, except possibly in $\PGL_2(q)$. We conclude  that either $H=G_2(q)$ or $H\leq M'$ with $M\cong \PGL_2(q)$.
Assume that the latter case holds. By the classification of the Hurwitz subgroups of $\PSL_2(q)$ \cite{Mac} we have
$H\cong \PSL_2(q)$ with $q=p$ if $p\equiv 0,\pm 1 \pmod 7$,  $q=p^3$ if $p\equiv \pm 2, \pm 3\pmod 7$.

In $G_2(q)$ there is just one conjugacy class of maximal subgroups isomorphic to $\PGL_2(q)$. Thus, by the information
given  before the statement,
there are at most $3$ non-conjugate
Hurwitz triples in $G_2(q)$ which can generate $\PSL_2(q)$. By Lemma \ref{nctriples},
different values of $r$ give rise to non conjugate triples. So we have to exclude at most  $3$
values of $r$ to avoid $H\le \PGL_2(q)$. Clearly we have to exclude at most other  $2$ values of $r$ for the absolute irreducibility.
We also note that, if $H\le \PGL_2(q)$, then $q\ne 17$ since $17\equiv 3 \pmod 7$. Our last claim now follows from the inequalities:
$p\ge 7>2+3$ if $q=p\equiv 0,\pm 1 \pmod 7$, and  $p^3-p>2+3$  if $q=p^3$.
\hfill $\square$ \end{proof}

For sake of completeness, we  provide Hurwitz generators also for the Janko  group $J_1$.
Consider the following matrices $x_{J_1},y_{J_1}\in \SL_7(11)$ (Family (I) of \cite{TV2}, with $r_3=2$ and $r_4=4$):
$$
x_{J_1}=\begin{pmatrix}
 0 &  0 &  0 &  1 &  0 &  0 & -1\\
 0 &  0 &  0 &  0 &  1 &  0 &  5\\
 0 &  0 &  0 &  0 &  0 &  1 &  2\\
 1 &  0 &  0 &  0 &  0 &  0 & -1\\
 0 &  1 &  0 &  0 &  0 &  0 &  5\\
 0 &  0 &  1 &  0 &  0 &  0 &  2\\
 0 &  0 &  0 &  0 &  0 &  0 & -1
\end{pmatrix}, \quad
y_{J_1}=\begin{pmatrix}
 1 &  0 &  0 &  0 &  4 &  0 &  3\\
 0 &  1 &  0 &  0 &  8 &  0 & -1\\
 0 &  0 &  1 &  1 &  0 &  0 &  9\\
 0 &  0 &  0 &  0 & -1 &  0 &  0\\
 0 &  0 &  0 &  1 & -1 &  0 &  0\\
 0 &  0 &  0 &  0 &  0 &  0 & -1\\
 0 &  0 &  0 &  0 &  0 &  1 & -1
\end{pmatrix}.
$$
Again, by a Magma calculation, the group $H=\left\langle x_{J_1} , y_{J_1}\right\rangle$  is isomorphic to $J_1$.

\end{document}